\def \MEX{{\sf MEX}}
\def \Exp{{\sf Exp}}
\def \Expo{{\sf Expo}}
\def \cro#1{\llbracket#1\rrbracket}
\def \eref#1{(\ref{#1})}
\def \wh#1{\widehat{#1}}
\def \1{{\bf 1}}
\def \l{\left}
\def \r{\right}
\def \ben{\begin{eqnarray}}
\def \een{\end{eqnarray}}
\def \be{\begin{eqnarray*}}
\def \ee{\end{eqnarray*}}
\def \G{{\sf G}}
\def \beq{\begin{equation}}
\def \eq{\end{equation}}
\def \ov#1{\overline{#1}}
\def \iuk{{i \in \cro{1,k}}}
\def \izk{{i \in \cro{0,k}}}
\newtheorem{lem}{Lemma}[section]
\newtheorem{pro}[lem]{Proposition}
\newtheorem{theo}[lem]{Theorem}
\theoremstyle{definition}
\newtheorem{rem}[lem]{Remark}
\def\Op{{\sf Op}}
\def \dd{\xrightarrow[n]{(d)}}
\newcommand{\sur}[2]{\mathrel{\mathop{\kern 0pt#1}\limits^{#2}}}
\newcommand{\sous}[2]{\mathrel{\mathop{\kern 0pt#1}\limits_{#2}}}
\newcommand{\eqd}{\sur{=}{(d)}}
\newcommand{\reals}{\mathbb{R}}
\newcommand{\sort}[1]{\text{\sf sort}\left( #1 \right)}
\newcommand{\gaps}[1]{\text{\sf gaps}\left( #1 \right)}
\newcommand{\esp}[1]{\mathbb{E} \left[ #1 \right]}
\newcommand{\prob}[1]{\mathbb{P} \left[ #1 \right]}
\newcommand{\abs}[1]{\left| #1 \right|}
\newcommand{\compact}{
  \topsep0pt
  \itemsep=0pt
  \partopsep=0pt
  \parsep=0pt
}
\title{Processes iterated \it ad libitum.}
\begin{document}
\renewcommand{\baselinestretch}{1.2}
\setcounter{page}{1}

 \begin{center}
 \LARGE\bf
Processes iterated \it ad libitum\rm.\\
 {\large \bf Jérôme Casse and Jean-Fran\c{c}ois Marckert}
 \rm \\
 \large{CNRS, LaBRI \\ Universit\'e Bordeaux \\
  351 cours de la Libération\\
 33405 Talence cedex, France}
 \normalsize
 \end{center}

\begin{abstract} 
Consider the $n$th iterated Brownian motion $I^{(n)}=B_n \circ\cdots \circ B_1$. Curien and Konstantopoulos proved that for any distinct numbers $t_i\neq 0$, $(I^{(n)}(t_1),\dots,I^{(n)}(t_k))$ converges in distribution to a limit $I[k]$ independent of the $t_i$'s, exchangeable, and gave some elements on the limit occupation measure of $I^{(n)}$. Here, we prove under some conditions, finite dimensional distributions of $n$th iterated two-sided stable processes converge, and the same holds the reflected Brownian motions. We give a description of the law of $I[k]$, of the finite dimensional distributions of $I^{(n)}$, as well as those of the iterated reflected Brownian motion iterated ad libitum.\\
  {\sf Keywords : } Iterated Brownian motion, Exchangeability, weak convergence, stable processes.  \\
  {\sf AMS classification : Primary  60J05, 60G52, 60J65,  Secondary 60G57; 60E99.} 
\end{abstract}

\section{Introduction}

\label{sec:int}

Let $B,B_{1},B_2,\dots$ be a family of i.i.d. independent two-sided Brownian motions (BM), meaning that for any $n$, $(B_n(t), t \geq 0)$ and  $(B_n(-t), t \geq 0)$ are two independent standard linear BM. Denote by $I^{(n)}=B_n \circ \cdots \circ B_1$ the $n$th time iterated BM. 
Curien and Konstantopoulos \cite{C-K} obtained the following results, gather in the following proposition.
\begin{pro}\label{pro:C-K} (1) For any $k\geq 1$, any non zero $t_1,\dots,t_k$, $(I^{(n)}(t_1),\cdots,I^{(n)}(t_k))$ converges in distribution. The limit distribution $\mu_k$ does not depend on the $t_i$'s, and then is exchangeable.\\
 (2) For  $(I_1,\dots,I_k) \sim \mu_k$, the equality $(I_1,\dots,I_k)\eqd (B(I_1),\cdots, B(I_k))$ holds.
Moreover, 
\[ (I_2-I_1,\dots, I_k-I_1)\eqd (I_1,\cdots,I_{k-1})\sim \mu_{k-1}.\]
The distribution of $I_1$ possesses the density $\exp(-2|x|)$ over $\mathbb{R}$ (this result appeared first in  Turban \cite{TUR}),\\
(3) Let $\phi_n$ be the occupation measure of $I^{(n)}$ on $[0,1]$, then the sequence $(\phi_n,n \geq 0)$ converges as $n \to \infty$ in distribution to a random probability measure $\phi$, which has a.s. a finite support, and which has a.s. a  H\"older continuous density with exponent $1/2-\epsilon$ for all $\epsilon>0$.
\end{pro}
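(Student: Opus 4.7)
The plan is to view $(I^{(n)}(t_1),\dots,I^{(n)}(t_k))$ as a Markov chain on $\reals^k$ with transition kernel $(x_1,\dots,x_k)\mapsto(B(x_1),\dots,B(x_k))$, where $B$ is a fresh two-sided Brownian motion, and to identify $\mu_k$ as its unique invariant probability. Part (1) then reduces to the three standard Markov-chain questions: tightness, existence of a stationary law, and uniqueness/attraction.

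For tightness I would use the moment recursion $\esp{B(x)^2\mid x}=|x|$: setting $v_n=\esp{I^{(n)}(t)^2}$ gives $v_{n+1}=\esp{|I^{(n)}(t)|}\leq\sqrt{v_n}$, so $(v_n)$ stays uniformly bounded and each coordinate is bounded in $L^2$ uniformly in $n$; tightness of the $k$-tuple follows. A Krylov--Bogolyubov Ces\`aro-averaging argument then produces at least one invariant law $\mu_k$. The genuinely non-trivial step is uniqueness of this invariant law (and thus attraction of the chain toward it from any starting point): here I would analyse the fixed-point equation on the Laplace-transform side, using the smoothing properties of the Brownian kernel to rule out other probability solutions. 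Once uniqueness holds, exchangeability of $\mu_k$ is immediate because the kernel commutes with permutations of coordinates.

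For part (2), the self-similarity $(I_1,\dots,I_k)\eqd(B(I_1),\dots,B(I_k))$ is simply the fixed-point equation, obtained by passing to the limit in $I^{(n+1)}(t_i)=B_{n+1}(I^{(n)}(t_i))$. For the increment identity, stationarity of Brownian increments gives that conditionally on $(I_1,\dots,I_k)$ the vector $(B(I_j)-B(I_1))_{j\geq 2}$ has the same joint law as $(B'(I_j-I_1))_{j\geq 2}$ for an independent two-sided BM $B'$; consequently $(I_2-I_1,\dots,I_k-I_1)$ solves the fixed-point equation in dimension $k-1$ and, by the uniqueness obtained in (1), must have law $\mu_{k-1}$. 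The density $e^{-2|x|}$ of $I_1$ is then verified by plugging it into the one-variable fixed-point equation
\[
f(x)=\int_{\reals}\frac{1}{\sqrt{2\pi|t|}}\, e^{-x^2/(2|t|)}\, f(t)\,dt,
\]
using the classical identity $\int_0^{\infty}(2\pi t)^{-1/2} e^{-x^2/(2t)-2t}\,dt=\tfrac12 e^{-2|x|}$.

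For part (3) I would attack $\phi_n$ through its moments: $\esp{\phi_n(g)^p}$ reduces by Fubini to spatial averages of finite-dimensional moments of $I^{(n)}$, and convergence of the moments of $\phi_n$ follows from part (1); the limiting moments determine a candidate $\phi$ thanks to the de Finetti structure of the infinite exchangeable sequence $(I_1,I_2,\dots)$. Boundedness of the support of $\phi$ reflects the uniform $L^2$-tightness already established, while H\"older regularity of the density would follow from uniform bounds on $\esp{\phi_n(A)^p}$ for short intervals $A$ together with a Garsia--Rodemich--Rumsey / Kolmogorov-type argument. Throughout, the main obstacle is the uniqueness of $\mu_k$ in part (1): once that analytic step is secured, all the other statements cascade from the fixed-point structure and the stationarity of Brownian increments.
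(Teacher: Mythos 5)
Your Markov-chain viewpoint, the tightness estimate $v_{n+1}\leq\sqrt{v_n}$, the commutation-with-permutations argument for exchangeability, and the fixed-point structure for part (2) are all correct and match the general strategy of Curien--Konstantopoulos. But there are two substantive gaps.

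First, the uniqueness and attraction of the invariant law $\mu_k$ --- which you correctly single out as the non-trivial step --- is only gestured at. Krylov--Bogolyubov together with the Feller property gives existence of \emph{some} invariant law, and ``smoothing on the Laplace-transform side'' does not by itself rule out others, nor does it give convergence from an arbitrary starting state. The working argument (used in \cite{C-K} and reproduced in Section~\ref{sec:PT} of the present paper for the stable case) is a Harris-recurrence argument: one exhibits a petite set $S_M=\{x[k]:M^{-1}\leq|x_i|\leq M,\ |x_i-x_j|\geq M^{-1}\}$, on which the $k$-dimensional transition density is bounded below by a fixed positive function, together with a Foster--Lyapunov drift for a function of the form
\[
V(x[k])=\max_i|x_i|+\sum_{0\leq i<j\leq k}|x_i-x_j|^{-1/\alpha}.
\]
The second term is essential: it penalizes near-collisions $x_i\approx x_j$, which is exactly the region where the Brownian transition kernel of the $k$-tuple degenerates and your $L^2$ bound on the individual coordinates gives no information. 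Without this control there is no $\phi$-irreducibility and hence no uniqueness argument.

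Second, part (3) is not correct as sketched. Uniform $L^2$ bounds on $I^{(n)}(t)$ for fixed $t$ do \emph{not} control $\sup_{[0,1]}|I^{(n)}|$, so they do not yield a.s. bounded support for $\phi$. The actual argument (see Lemma~\ref{lem:rangeISP} in this paper, and Prop.~7 of \cite{C-K} for the Brownian case) is a recursion on the range: $R_{I^{(n+1)}}(0,1)\eqd(R_{I^{(n)}}(0,1))^{1/2}D$ with $D=R_B(0,1)$, giving the a.s. convergent infinite product $\prod_i D_i^{2^{-i}}$ via Doob's inequality and Borel--Cantelli. For the H\"older continuity of the density, \cite{C-K} rely on Pitt's theorem on local times of Gaussian vector fields, a specifically Gaussian result; the paper even notes this is precisely why the regularity statement does not transfer to the stable case. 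A Garsia--Rodemich--Rumsey approach on $\esp{\phi_n(A)^p}$ would require uniform-in-$n$ control of the joint densities of $(I^{(n)}(t_1),\dots,I^{(n)}(t_p))$ near the diagonal, which is again exactly what the Harris machinery is built to supply, so one cannot bypass it with soft moment arguments.
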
 

In this paper we go on this study in several connected directions: among other we give some elements on $\mu_k$, study iterated reflected BM, iterated stable processes, and provide a description of the finite dimensional distribution of the $n$th iterated BM $I^{(n)}$.
\medskip 

Here are the main lines of the paper. 
In Section \ref{sec:P} we present the studied processes and fix some notations.
In Section \ref{sec:FC} we provide some common features of the processes we iterate. Given a finite set of points $L=\{\ell_i, i=0,\dots,k\}$, the gaps sequence of $L$ is the sequence $G = (\wh{\ell}_i-\wh{\ell}_{i-1},1\leq i \leq k)$, of differences of successive points in $L$ when sorted in increasing order. It turns out that for processes $X$ with independent and stationary increments, the distribution of the gaps sequence of $X(L)=\{X(\ell_i),i\in I\}$ can be described uniquely using the gaps sequence $G$ of $L$. This simple property will appear to be at the heart of our advances about iterated BM. \par
In Section \ref{sec:BP}, devoted to iterated BM and iterated reflected BM, it is explained that if the initial gaps sequence $G$ is a $k$ tuple of independent exponential random variables (r.v.) with parameters $(\lambda_1,\dots,\lambda_k)$ then the gaps sequence of $X(L)$ is distributed according to a mixture of k-tuple of independent exponential r.v., whose parameters are explicit functions of $(\lambda_1,\dots,\lambda_k)$. To encode this property, we define a Markov chain $(Z^{(n)},n\geq 1)$ at the parameter level, which makes explicit this parameter evolution (see \eref{eq:parameter-kernel} and around). \par
A consequence is that the gaps sequence of the iterated BM ad libitum is a mixture of independent exponential r.v., and this mixture can be described precisely using the invariant distribution of the Markov chain $Z^{(n)}$ (Propositions \ref{pro:etsdy}, \ref{pro:ap} and Theorem \ref{theo:multi-dim}). \par
Somehow, Remark \ref{rem:Hutch} implies that our description of the iterated BM finite dimensional distribution, while complex, is the simplest we could expect.\par

The same construction, using an analogous of the parameter Markov chain $(Z^{(n)},n\geq 1)$,
 implies that the law of the $n$th iterated BM is accessible if the gaps sequence of the initial distribution follows some independent exponential r.v. In Section \ref{sec:IRMn}, it is seen that this property provides a Laplace type transform of the finite dimensional distributions of the $n$th iterated BM $I^{(n)}$. 
Section \ref{sec:IRMal} is devoted to the iteration of reflected BM.
Section \ref{sec:ISPal} is devoted to the iteration of stable processes, whose study appear much similar to that of iterated BM, except that explicit computations are out of reach for the moment.

We discuss in Section \ref{sec:Con} some natural extensions of this work.

\section{Random processes} 
\label{sec:P}

``BM'' will be used to denote the two-sided linear BM as defined at the beginning of Section \ref{sec:int}. The process corresponding to the $n$th iterated process will be denoted $I^{(n)}$ (the process iterated under discussion, denoted $X$ further, will be clear from the context). The processes iterated ad libitum, the limit of $I^{(n)}$ in the sense of the topology of finite dimensional distribution convergence, when it exists will be denoted $I$. The reflected BM is the (one-sided) process $(|B(t)|,t\geq 0)$ where $B$ is the standard linear BM. 

We go on discussing stable processes (see Applebaum \cite{AP} for more information). We will consider only two-sided stable variables $Z$ that can be written under the form $A+r$ where $A$ is stable symmetric (null skewness), and $r$ a real number (the location parameter).  The characteristic function of such a r.v. $Z$ can be written under the form 
\[\psi(u)=`E(e^{iuZ})=e^{ \eta(u)}\]
where
\[\eta(u)= -\abs{u}^\alpha \sigma^\alpha +  i  r u \]
where $\alpha \in (0,2]$ is the index of stability, $\sigma \in (0,\infty)$ the scale parameter 
(Theorem 1.2.21 in \cite{AP}). 
A stable process $(X(t),t \geq 0)$ with parameters $(\alpha,\sigma,r)$ is the process such that $ X(0)=0$, with stationary and independent increments, and whose characteristic function is
\begin{equation}
\Phi_t(u) = \esp{e^{i u X(t) }} = e^{ t \eta(u)}.
\end{equation}
The two sided stable process  $(X(t),t \in \mathbb{R})$ is the process such that $(X(t),t\geq 0)$ and $(X(t),t<0)$ are independent and $(X(t),t\geq 0)$ and $(-X(-t),t\geq 0)$ are both one-sided stable process with parameters $(\alpha,\sigma,r)$. 
For any $t\in \mathbb{R}^\star$,
\ben\label{eq:rec}
\frac{X_t-tr}{|t|^{1/\alpha}}\eqd X_1-r.
\een
For any  $c> 0$, $(X(c^\alpha t),t \geq 0)$ is a stable process with parameters $\left( \alpha,c \sigma, c^{\alpha} r \right)$. 

Let  $(X_1,X_2,\dots,)$ be a family of i.i.d. two-sided stable processes with parameters $(\alpha,\sigma,r)$. 
The $n$th iterated stable process $I^{(n)}$ of parameters $(\alpha,\sigma,r)$ is the process 
\begin{displaymath}
I^{(n)}=X_n \circ \cdots \circ X_1.
\end{displaymath}
We keep the same notation as for the iterated two-sided BM for some reasons that will appear clear below.

\begin{rem} \label{rem:bmit}
The BM is the stable process with parameters $(2,1/\sqrt{2},0)$. Its Markov kernel is $`P(B_{t+s}\in dy | B_s=x)=\exp(-(y-x)^2/(2t))/\sqrt{2\pi t}$.
\end{rem}
Iteration of stable processes with parameter $(\alpha,1,0)$ and $(\alpha,\sigma,0)$ can be directly compared as explained in Remark \ref{rem:comp}.

\section{Iteration of processes: general considerations}
\label{sec:FC}

In this section, we discuss some common features of the processes we iterate in the paper. \par

All along the section $k$ is a positive integer: the size of the finite dimensional distributions under inspection.

\paragraph{Notations.} We denote by $\cro{a,b}$ the ordered sequence $[a,b]\cap \mathbb{Z}$. 
The permutation group of the set $\cro{a,b}$ is denoted ${\cal S}\cro{a,b}$. Sometimes, we will use the notation $x[a:b]$ instead of $(x_a,\dots,x_b)$, and also $t[a:b], g[a:b], \lambda[a:b]$, etc, accordingly. 
The simple notation $x[k]$ will stand for $x[1:k]$.\par
For any sequence $\ell[0:k]=(\ell_0,\dots,\ell_k)$, denote by  $(\wh{\ell}[0:k])=\sort{\ell[0:k]}$ this sequence sorted in increasing order.
For any $i\in\cro{1,k}$, set
\[\Delta \ell_i=\ell_i-\ell_{i-1}.\]
The gaps sequence of $\ell[0:k]$ is the sequence of distances between the elements of $\{\ell_0,\cdots,\ell_k\}$. It is defined by
\[\gaps{\ell[0:k] }=\l(\Delta \wh{\ell}_i, i \in\cro{1,k}\r).\]
Last, for $x[1:k]$ a sequence, $\bar{x}[0:k]$ is the sequence defined by
\ben\label{eq:S}\bar{x}_0=0,~\ov{x}_i=x_1+\dots+x_i,~~ \textrm{ for } \iuk.\een

\paragraph{Iteration of processes.}

What follows is valid for processes $X$ such that $X(0)=0$ a.s., with independent and stationary increments, which distribution are absolutely continuous with respect to the Lebesgue measure on $`R$ such that for any $t>s$, 
\ben\label{eq:qds}
X(t)-X(s)\eqd X(t-s)\een
whatever are the signs of $s$ and $t$. Notice that this implies $-X(s)\eqd X(-s)$ (taking $t=0$).
These general setting are satisfied by BM, by symmetric two-sided stable processes, and more generally, by symmetric two-sided Lévy processes such that for any $t>0$, $X(t)$ owns a density.  Some modifications are needed for processes as the reflected BM which have stationary but dependent increments. This is discussed in Section \ref{sec:IRMal}.

Denote by $\Phi_t(.)$ the density of the distribution of $X(t)$. We then have 
\ben \label{eq:sym}
\Phi_{t}(y)=\Phi_{-t}(-y),~~ \textrm{ for any }(t,y) \in \mathbb{R}^\star \times \mathbb{R}.
\een

\subsection{The gaps sequence evolution}
Let $(t_0=0,t_1,\dots,t_k)$ be some distinct real numbers. We start with the description of the distribution of $(X(t_i), i\in\cro{0,k})$. As usual, the description is easier if the $t_i$ are sorted... 

Let $\tau\in{\cal S}\cro{0,k}$ such that $(\wh{t}_i=t_{\tau(i)},i\in\cro{0,k})=\sort{t[0:k]}$. Hence , $\wh{t}_{\tau^{-1}(0)}=0$. Further let $g[k] = \gaps{t[0:k]}$. The r.v. $(X(\wh{t}_{i+1})-X(\wh{t}_{i}), i\in\cro{1,k})$ are independent, and $X(\wh{t}_{i})-X(\wh{t}_{i-1})\eqd X(\Delta \wh{t}_{i})$ depends on the gaps sequence of the $t_i$'s. Using the independence of the increments of $X$ and their stationary, we get that the density $f$ of $(X(t_i), \iuk)$ on $`R^k$ is
\ben\label{eq:evoll}
f(y[k])=
\prod_{j=1}^k \Phi_{\Delta \wh{t}_j}\l(\Delta y_{\tau(j)}\r).
\een
where  in the right hand side $y_0=0$. Indeed, one has  $(X(t_i),\izk)=(X(\hat{t}_{\tau^{-1}(i)}),\izk)$, and computing
$`P\l(X(\hat{t}_{\tau^{-1}(i)} \in dy_i, \iuk \r)= `P\l(X(\hat{t}_{i}) \in dy_{\tau(i)}, \iuk \r)$ gives the result, using \eref{eq:sym}.  

The distribution of $\gaps{X(t_i), \izk}$ depends also only on $\gaps{t[0:k]}$, and this is one of the key point of the paper. First, determine the vectors $(X(t_i), i\in\cro{0,k})$ such that
\ben\label{eq:gaps}
\gaps{X(t_i), i\in\cro{0,k}}=x[k]
\een 
some fixed element of $ (0,+\infty)^k$. Clearly \eref{eq:gaps} holds iff there exists some $a\in\mathbb{R}$ such that
 \ben\label{eq:qdq}
 \sort{X(t_i), i\in\cro{0,k}}=\l( a+\ov{x}_i , i\in\cro{0,k}\r).
 \een
Equation \eref{eq:qdq} implies that for a certain permutation $\tau \in {\cal S}\cro{0,k}$ 
\[(X(\wh{t}_{i}),i\in\cro{0,k})=(a+\ov{x}_{\tau(i)}, i\in\cro{0,k})\] 
from what we find 
 \ben\label{eq:evol}
\l(X(\wh{t}_{i})-X(\wh{t}_{i-1}), i \in\cro{1,k}\r)=(\Delta \ov{x}_{\tau(i)},i \in\cro{1,k}).
\een
The following proposition should be clear now
 \begin{pro}\label{pro:pro2}  Let $t[0:k]$ be $k+1$ distinct real numbers with $t_0=0$ such that
 \[\gaps{t[0:k]}=g[k]\in (0,+\infty)^k.\]
 The distribution of  $\gaps{(X(t_0),\dots,X(t_{k})}$ has density $\Psi_{g[k]}$ 
on $(\mathbb{R}^+)^k$ where
\ben \label{eq:Psi}
\Psi_{g[k]}(x[k])=\sum_{\tau\in {\cal S}\cro{0,k}} \prod_{i=1}^k \Phi_{g_i}\l( \Delta \ov{x}_{\tau(i)}\r) 
\,1_{x_i>0}.
\een
\end{pro}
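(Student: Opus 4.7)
The plan is to compute the density of $G := \gaps{X(t_0), \dots, X(t_k)}$ directly from the joint density of $(X(t_1), \dots, X(t_k))$ obtained in \eqref{eq:evoll}, by partitioning $\mathbb{R}^k$ according to the (random) order of $(0, X(t_1), \dots, X(t_k))$ and performing a change of variables on each piece.

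For any bounded continuous $\phi: (\mathbb{R}^+)^k \to \mathbb{R}$, start from
\[
\mathbb{E}[\phi(G)] = \int_{\mathbb{R}^k} \phi\l(\gaps{0, y_1, \dots, y_k}\r) \prod_{j=1}^k \Phi_{g_j}(y_{\tau(j)} - y_{\tau(j-1)}) \, dy,
\]
where $\tau$ is the fixed permutation sorting $t[0:k]$ and $y_0 := 0$. Partition $\mathbb{R}^k$ into the open cones $R_\sigma = \{y : y_{\sigma(0)} < y_{\sigma(1)} < \cdots < y_{\sigma(k)}\}$ indexed by $\sigma \in \mathcal{S}\cro{0,k}$; their complement is Lebesgue-null. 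On $R_\sigma$, set $x_i := y_{\sigma(i)} - y_{\sigma(i-1)}$ for $i \in \cro{1,k}$, so $x \in (\mathbb{R}^+)^k$ and $\gaps{0, y_1, \dots, y_k} = x$. Inverting, the constraint $y_0 = 0$ forces $y_{\sigma(0)} = -\ov{x}_{\sigma^{-1}(0)}$, so that $y_l = \ov{x}_{\sigma^{-1}(l)} - \ov{x}_{\sigma^{-1}(0)}$ for every $l \in \cro{0,k}$. This map $R_\sigma \to (\mathbb{R}^+)^k$ is an affine linear bijection whose Jacobian has modulus $1$ (a coordinate permutation composed with the unimodular cumulative-difference map; one verifies this immediately from the explicit matrix).

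Under this change of variables, $y_{\tau(j)} - y_{\tau(j-1)} = \ov{x}_{\sigma^{-1}\tau(j)} - \ov{x}_{\sigma^{-1}\tau(j-1)} = \Delta\ov{x}_{\sigma^{-1}\tau(j)}$, giving
\[
\mathbb{E}[\phi(G)] = \sum_{\sigma \in \mathcal{S}\cro{0,k}} \int_{(\mathbb{R}^+)^k} \phi(x) \prod_{j=1}^k \Phi_{g_j}\l(\Delta\ov{x}_{\sigma^{-1}\tau(j)}\r) dx.
\]
Reindexing by $\pi := \sigma^{-1}\tau$ — which, for the fixed $\tau$, ranges over all of $\mathcal{S}\cro{0,k}$ as $\sigma$ does — this equals $\int \phi(x) \Psi_{g[k]}(x) dx$. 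Since $\phi$ is arbitrary, $\Psi_{g[k]}$ is the density of $G$ on $(\mathbb{R}^+)^k$. The one subtle point is the bookkeeping of the two permutations, the fixed $\tau$ sorting the time-stamps versus the variable $\sigma$ sorting the values of $X$: it is precisely the final reindexing that absorbs the dependence on $\tau$ and produces the fully symmetric sum appearing in the statement. The Jacobian computation, once the transformation is decomposed as a permutation followed by taking consecutive differences, is routine.
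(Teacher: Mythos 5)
Your proof is correct and follows the same route the paper sketches: both identify the $(k+1)!$ preimages of a gap vector $x[k]$ under the gaps map (parametrized by the value-ordering permutation) and sum the product density over them, with a unimodular Jacobian. The paper leaves this at the level of ``should be clear now'' after \eref{eq:evol}; you usefully make the cone partition, the affine change of variables, and the reindexing $\pi=\sigma^{-1}\tau$ (which absorbs the dependence on the time-sorting permutation and makes the symmetry of $\Psi_{g[k]}$ manifest) explicit.
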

In the mono-dimensional case,  
\ben\label{eq:double}
\Psi_g(x)= (\Phi_g(x)+\Phi_g(-x))1_{x\geq 0}
\een 
and this is also $2\Phi_g(x)1_{x\geq 0}$ when $\Phi_g$ is even (that is when $r=0$ in the stable processes case). 

We may now define a time-homogeneous MC $(\G^{(n)}[k]=(\G_i^{(n)}, i \in\cro{1,k}), n\geq 0)$  taking its values in $(0,+\infty)^k$, giving the successive gaps sequence starting from an initial one; its Markov kernel is given by $\Psi$ in the sense of Proposition \ref{pro:pro2}. We will call $\G^{(n)}$ the gaps sequence MC. 
Assume that $\G^{(0)}_k$  is a r.v. which possesses a density $f_k$ on $(0,+\infty)^k$. The density of $\G^{(1)}_k$  is $\Op_k(f_k)$ where $\Op_k$ is the following integral operator (which sends $f_k$ onto $\Op_k(f_k)$), where for any $x[k]\in `R^k$,
\ben\label{eq:Opk}
  \Op_k(f_k)(x[k]):=\int\cdots\int f_k(g[k]) \Psi_{g[k]}(x[k]) dg_1\dots d{g_k}.
\een

Of course, if one considers a case for which the iterated process converges in distribution,
\[I^{(n)}[k]=(I^{(n)}(t_1),\dots,I^{(n)}(t_k))\dd I[k]=(I(t_1),\dots,I(t_k))\] 
then the associated gap MC $(\G^{(n)}[k], n\geq 0)$ converges too  since the map $x[0;k]\to \gaps{x[0:k]}$ is continuous. 
The converse is false but not that much: the gaps sequence characterises the  points relative positions. An additional information is needed to recover their positions: somehow the distribution of the translations which sends $\gaps{I(t_i), \izk}$ onto $\{I(t_i), \izk\}$, and the distribution of the permutation which provides the distribution of $(I(t_i), \izk)$ knowing $\{I(t_i), \izk\}$. A simple but powerful trick,  discussed at several places in the paper is the following : we are able to pass from the gaps sequence MC to the usual one if instead of $(I^{(n)}(t_i), \iuk)$, we study $(I^{(n)}(t_i), \izk)$  instead, where $t_0=0$.  

We can sum up in two slogans the relative importance of the iteration of the initial process $X$ with respect to the gap MC: the proof of convergence is easier for $(I^{(n)}(t_i),\izk)$, but the behaviour of $\G^{(n)}[k]$  is easier to understand, and its distribution in the case of Brownian processes is tractable.

\subsection{The iterated process evolution}
\label{sec:IPE}
Any sequence  $t[0:k]$ such that $t_0=0$ can be encoded by the pair  $C[t]:=\l[g[k],\tau\r]$ formed by the gaps sequence of $t$, and the ``labelling permutation'' $\tau \in {\cal S}\cro{0,k}$,  so that
\ben\label{eq:enc}
 t_i= \ov{g}_{\tau(i)}-\ov{g}_{\tau(0)}, \izk.
\een
Of course, thanks to \eref{eq:enc}, the decoding $t=C^{-1}(g[k],\tau)$ is well defined too (taken $t_0=0$).
Follows from \eref{eq:enc} again, that $t_{\tau^{-1}(i)}$is non decreasing in $i$ and then for any $i$ we have
\ben\label{eq:whtau}
\wh{t}_i=t_{\tau^{-1}(i)}=\ov{g}_{i}-\ov{g}_{\tau(0)}.
\een
The Markov kernel of the MC $n\mapsto(I^{(n)}(t_i), \izk)$ can be made explicit at the level of the encodings.  Consider $t[0:k]$ with $t_0=0$ and $C[t]:=\l[g[k],\tau\r]$ its encoding, and $t'[0:k]$ with $t'_0=0$  and $C[t']:=\l[g'[k],\tau'\r]$ its encoding. Denote by $K$ the corresponding Markov kernel (with transparent convention) which gives the distribution of $C[I^{(n+1)}]$ knowing  $C[I^{(n)}]$. We have
\be
\l\{X(t_i) \in dt'_i, \iuk\r\}&=&\l\{X(\wh{t}_i) \in d t'_{\tau^{-1}(i)}, \iuk\r\}
\ee
 and then using \eref{eq:evol}, \eref{eq:whtau} and  $t'_i= \ov{g'}_{\tau'(i)}-\ov{g'}_{\tau'(0)}$ for $\izk$,
\be
K_{g[k],\tau}[(dg'_1,\dots,dg'_k),\tau']&=&`P(X(t_i) \in dt'_i, \iuk)\\
                                  & = & \prod_{i=1}^k \Phi_{\Delta \wh{t}_i}\l(\Delta  t'_{\tau^{-1}(i)}\r)\\
                                  & = & \prod_{i=1}^k \Phi_{g_i}\l(\Delta \ov{g'}_{\tau'(\tau^{-1}(i))}\r). 
\ee
We rewrite more simply as
\ben\label{eq:tran}
K_{g[k],\tau}[(dg'_1,\dots,dg'_k),\tau'\circ \tau]=\prod_{i=1}^k \Phi_{g_i}\l(\Delta \ov{g'}_{\tau'(i)}\r) 
\een
from what we observe that 
\ben\label{eq:recws}
K_{g[k],\tau}[(dg'_1,\dots,dg'_k),\tau'\circ\tau]=K_{g[k],Id}[(dg'_1,\dots,dg'_k),\tau']\een
and then the LHS is independent of $\tau$. Of course, all of this is valid for $\tau,\tau' \in {\cal S}\cro{0,k}$, and for positive $g_i$'s, $g'_i$'s.

\subsection{Asymptotic independence of labelling permutation and gaps sequence}
\label{eq:AIL}
We explain now why in the encoding Markov chain $(C[I^{(n)}[k],n\geq 1)$, the gaps sequence ``becomes progressively'' independent from the labelling permutation as stated in the main convergence theorems of the paper, where this appears under the form of exchangeability of the limiting distribution $\gamma_k$. The asymptotic exchangeability can be proved directly (see \cite{C-K} or the end of Section \ref{sec:PT}). It is somehow quite complex since it relies on the convergence of $(I^{(n)}(t_1),\dots,I^{(n)}(t_k))$ to a limit independent of the $t_i$'s, and the proof relies on some (classical but) involved estimates. 

We present here another argument which makes this more apparent and which we think, can be of some interest if ones tries to iterate some processes for which the arguments developed in Section \ref{sec:PT} fail.

It is a coupling argument. For a fixed pair $(g[k],\tau)$, consider (using \eref{eq:recws}),
\be
\underline{K}_{g[k],\tau}[(dg'_1,\dots,dg'_k),\tau'']&=&\min_{\tau'}K_{g[k],\tau}[(dg'_1,\dots,dg'_k),\tau'\circ \tau]\\
&=&\min_{\tau'}K_{g[k],Id}[(dg'_1,\dots,dg'_k),\tau']
\ee
the ``minimal flow'' going to $[(dg'_1,\dots,dg'_k),\tau']$ from $(g[k],\tau)$, minimum taken on the $\tau'\in {\cal S}\cro{0,k}$.

In general $\underline{K}$ is a defective Markov kernel. Since it does not depend on $\tau''$, the marginal restriction of $\underline{K}$ to the permutation labelling, is the uniform distribution on ${\cal S}\cro{0,k}$.

Therefore, $\underline{K}_{g[k],\tau}[(dg'_1,\dots,dg'_k),\tau'']$ possesses a simpler form:
\ben
\underline{K}_{g[k],\tau}[(dg'_1,\dots,dg'_k),\tau'']=\kappa_{g[k]}(dg'_1,\dots,dg'_k) \frac{1_{\tau''\in {\cal S}\cro{0,k}}}{(k+1)!}
\een
where $\kappa$ is a defective Markov kernel on $`R^+{}^k$. Let 
\be
q({g[k]}) & = & \kappa_{(g[k])}(`R^+{}^{k})
\ee 
be the total mass of $\underline{K}_{g[k],\tau}$ and of $\kappa_{g[k]}$. (Notice that the cases treated in the present paper, for $g[k]\in(0,\infty)^k$, $q({g[k]})>0$.)  Now set 
\begin{equation}\left
\{\begin{array}{rcl}
K^{[1]}_{g[k],\tau}[(dg'_1,\dots,dg'_k),\tau'\circ\tau]&=&\frac{\kappa_{g[k]}(dg'_1,\dots,dg'_k)/{(k+1)!}}{q({g[k]})}\\
K^{[2]}_{g[k],\tau}&=&\frac{K_{g[k],\tau}-q({g[k]})K^{[1]}_{g[k],\tau}}{1-q({g[k]})}
\end{array}\right.
\end{equation}
so that $K^{[2]}$ is indeed a Markov kernel. It is easily seen that the initial kernel $K$ can be represented as 
\[K_{g[k],\tau}=q({g[k]})K^{[1]}_{g[k],\tau}+(1-q({g[k]}))K^{[2]}_{g[k],\tau},\]
which is the core of our coupling: to sample the MC $C[I^{(n)}]$ from $(g[k],\tau)$, first, sample a Bernoulli random variable with parameter $q({g[k]})$. If it is 1, then use the kernel $K^{[1]}$, else the kernel $K^{[2]}$. 
If the kernel $K^{[1]}$ is used, the new value $(G[k+1],\tau_{k+1})$ has the following property: $\tau_{k+1}$ is uniform and independent from $G[k+1]$, which has distribution  $\kappa_{g[k]}(.)/q({g[k]})$. 

Then as soon as a transition $K^{[1]}$ is used the labelling permutation and the gaps sequence become independent, and this independence carry on, since by $K$ the labelling permutation evolves somehow independently from the current labelling permutation (and it evolves by product, see \eref{eq:recws}). 
It remains to say some words about the frequency of these renewal events: letting $C[I^{(n)}]=(G[k]^{(n)},\tau_n)$ the successive values of the encoding chain, one sees that each time the  renewal probability is $q({G[k]^{(n)}})$. To get renewal with probability one in the sequence $C[I^{(n)}]$ we need not much: continuity and positivity of the kernel on each compact, and  tightness of the sequence $C[I^{(n)}]$.

\section{Iteration of Brownian processes}
\label{sec:BP}

This section is devoted to our results concerning the iterated BM ad libitum, iterated reflected BM ad libitum, and $n$th iterated BM. We will consider iteration of standard linear Brownian motion, but using Remark \ref{rem:comp}  iteration of Brownian motions multiplied by a constant can be studied as well.

We start with a key point relative to the description of the Markov kernel of the gaps sequence MC when $X$ is a BM (but many of what follows is valid for more general Gaussian processes). 
In this section, $\Phi_g$ is the density of the centred Gaussian distribution with variance $g$.
We denote further by $\Exp[\lambda,x] = \lambda e^{-\lambda x} 1_{x \geq 0}$ the density of $\Expo[\lambda]$, the exponential distribution with parameter $\lambda$. Let $\MEX_k$ be the set of probability measures on $`R^k$ having a density of the form 
\ben\label{eq:par-law2}
f\l(x[k]\r)=\int_{`R^+{}^k} \l(\prod_{i=1}^k \Exp\l[ \lambda_i,x_i\r]\r)  d\mu(\lambda_1,\cdots,\lambda_k),~~ x[k]\in  `R^k
\een
where $\mu$ is a general probability distribution on $`R^+{}^k$, called the parameter law of $f$. In other words, the set $\MEX_k$ is the set of mixtures of product of exponential distributions. The key result in this section, valid only in the Gaussian case, is the following proposition.
\begin{pro}\label{eq:lin} For any $k\geq 1$,  $\Op_k$ is linear on $\MEX_k$, and then $\MEX_k$ is stable by $\Op_k$. 
\end{pro}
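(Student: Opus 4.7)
The plan is to exploit the defining integral form of $\Op_k$ together with one Gaussian-specific identity. First, by definition of $\MEX_k$, any $f\in\MEX_k$ is a mixture $f(g[k])=\int \prod_i\Exp[\lambda_i,g_i]\,d\mu(\lambda)$ for some probability measure $\mu$ on $(0,+\infty)^k$. Since all integrands are nonnegative, Fubini lets me push $\Op_k$ inside the outer integral:
\[
\Op_k(f)(x[k])\;=\;\int \Op_k\!\left(\prod_{i=1}^k \Exp[\lambda_i,\cdot]\right)\!(x[k])\,d\mu(\lambda).
\]
This interchange is exactly the linearity claim ($\Op_k$ acts at the parameter-measure level as the push-forward by a Markov kernel on $(0,+\infty)^k$), so everything reduces to showing that for each fixed $\lambda\in(0,+\infty)^k$ the image $\Op_k(\prod_i\Exp[\lambda_i,\cdot])$ lies back in $\MEX_k$.

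Using \eref{eq:Psi} and \eref{eq:Opk}, that image is the sum over $\tau\in\mathcal{S}\cro{0,k}$ of
\[
\prod_{i=1}^k \int_0^\infty \lambda_i\, e^{-\lambda_i g_i}\,\Phi_{g_i}\!\left(\bar{x}_{\tau(i)}-\bar{x}_{\tau(i-1)}\right)dg_i
\]
times $\prod_i 1_{x_i>0}$. The key Brownian ingredient is the identity
\[
\int_0^\infty \lambda\, e^{-\lambda g}\,\frac{e^{-y^2/(2g)}}{\sqrt{2\pi g}}\,dg\;=\;\sqrt{\lambda/2}\;e^{-\sqrt{2\lambda}\,|y|},
\]
which follows from the classical $\int_0^\infty g^{-1/2}e^{-ag-b/g}\,dg=\sqrt{\pi/a}\,e^{-2\sqrt{ab}}$. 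Each of the $k$ one-dimensional integrals thus collapses to an exponential in $|\bar{x}_{\tau(i)}-\bar{x}_{\tau(i-1)}|$.

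Next I would invoke the geometric fact that $|\bar{x}_{\tau(i)}-\bar{x}_{\tau(i-1)}|=\sum_{j=m_i^\tau+1}^{M_i^\tau}x_j$ with $m_i^\tau=\min(\tau(i-1),\tau(i))$ and $M_i^\tau=\max(\tau(i-1),\tau(i))$. Regrouping exponents by $j$, the $\tau$-summand takes the form $(\text{const})\cdot\prod_{j=1}^k e^{-c_j(\tau,\lambda)\,x_j}$ with $c_j(\tau,\lambda)=\sum_{i\,:\,m_i^\tau<j\leq M_i^\tau}\sqrt{2\lambda_i}$. A short combinatorial check gives $c_j(\tau,\lambda)>0$ for every $j\in\cro{1,k}$: the sequence $\tau(0),\tau(1),\ldots,\tau(k)$ is a Hamiltonian path on $\cro{0,k}$, so it must traverse the gap between $j-1$ and $j$ at least once. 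Therefore each $\tau$-term is a positive multiple of $\prod_j\Exp[c_j(\tau,\lambda),x_j]$, and because $\Op_k$ preserves the property of being a probability density, the multipliers across $\tau$ automatically sum to $1$. This exhibits $\Op_k(\prod_i\Exp[\lambda_i,\cdot])$ as a finite mixture of $(k+1)!$ product exponentials, i.e.\ as an element of $\MEX_k$, and finishes the stability claim.

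The main obstacle is really only spotting the Gaussian-exponential Laplace identity above: it is the Brownian-specific closure that makes $\Op_k$ preserve $\MEX_k$. Replacing $\Phi_g$ by a non-Gaussian stable density destroys this clean closed form, which is precisely why the paper later warns that explicit computations are out of reach in the stable case. Everything else is bookkeeping, the only delicate check being the positivity of the $c_j(\tau,\lambda)$.
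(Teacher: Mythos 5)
Your proof is correct and follows essentially the same route as the paper: reduce to a single product-exponential via Fubini/linearity, apply the one-dimensional Gaussian-exponential Laplace identity (the paper derives the same identity, its \eref{eq:sim-trans}, via a Fourier transform rather than the $\int_0^\infty g^{-1/2}e^{-ag-b/g}\,dg$ formula), and then regroup the exponents by $x_j$ to exhibit a finite mixture of $(k+1)!$ product exponentials. The only cosmetic difference is that you infer the normalization of the mixture weights from $\Op_k$ preserving probability densities, whereas the paper writes out the weights $w_\tau$ explicitly from \eref{eq:iden1}.
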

\begin{proof}
We start by the one-dimensional case for which  \eref{eq:double} holds. \par
Let $f_1(x)=\Exp[\lambda,x]$, and let us find $Op_1(f_1)(x)$ 
by computing its Fourier transform
\[FT_0(a)=\int_{x\geq 0} e^{i a x}\int_{g>0} 2\Phi_g(x)\lambda e^{-\lambda g}dg dx.\] 
This is done in two steps: $Op_1(f_1)$ is the density of a positive r.v. $Z$. 
Hence 
\[FT_1(a)= \frac{1}{2}\int_{-\infty}^{+\infty} e^{i a x}\int_{g>0}  2\Phi_g(x)\lambda e^{-\lambda g}dg dx,\] 
is the Fourier transform of $`e Z$ where $`e$ is a uniform random sign, independent of $Z$. By Fubini, one finds that it is $\int_{g \geq 0}\lambda e^{-\lambda x} e^{-ga^2} dg= \frac{1}{1+\frac{a^2}{2\lambda}}$, which is the Fourier transform of $`e Y$ where $Y$ has distribution $\Expo\l[\sqrt{2\lambda}\r]$.  We deduce from that the identity
\ben\label{eq:sim-trans}
\int_0^{+\infty}\Exp[\lambda,x]\Psi_g(x)dg =\Exp[\sqrt{2\lambda},x],~~ x>0.
\een
In words, $\Op_1$ sends $x\mapsto\Exp[\lambda,x]$ on $x\mapsto\Exp[\sqrt{2\lambda},x]$.
\begin{rem}\label{rem:1} Notice that this implies that $\Exp[2]$ is stable by $\Op_1$. This is the result by Curien-Konstantopoulos \cite{C-K} who proved that $I_1\sim `e Y$ where $Y\sim \Expo[2]$.
 \end{rem}

Assume $k\geq 1$ now. Observe the effect of $\Op_k$ on a product of exponential distributions.
By \eref{eq:sim-trans} and \eref{eq:double}, one has for any $x[k]\in(0,+\infty)^k$, any $\tau\in{\cal S}\cro{0,k}$, the identity
\ben\label{eq:elt}
\sum_{\tau\in{\cal S}\cro{0,k}}\int_{`R^+{}^k}\prod_{i=1}^{k} \Big(\Exp[c_i,g_i]
\Phi_{g_i}\l(\Delta \ov{x}_{\tau(i)}\r)\Big)dg_1...dg_k= \sum_{\tau\in{\cal S}\cro{0,k}}
\frac{1}{2^k} \prod_{i=1}^k \Exp\l[\sqrt{2c_i},\l|\Delta \ov{x}_{\tau(i)}\r|
\r].
\een
An important fact appears here, fact valid only in the Brownian case : one can separate the variables $x_i$'s in the right hand side and let appear a product of independent $\Expo[c'_i]$ r.v., thanks to the two following identities
\ben\label{eq:iden1}
\left\{ \begin{array}{rcl}
\Exp[c,x+x']&=&\Exp[c,x]\,\Exp[c,x']/c,\\ 
\Exp[c,x]\,\Exp[c',x]&=&\Exp[c+c',x]\frac{cc'}{c+c'}.
\end{array}\right.
\een
Let us separate the variables, and for this, collect in $E_{\tau,i}$ the contribution relative to $\Exp[.,x_i]$. 

Since $\ov{x}_{\tau(j)}=x_1+\dots+x_{\tau(j)}$, then $|\Delta \ov{x}_{\tau(j)}|= x_{1+\min(\tau(j),\tau({j-1}))}+\dots+ x_{\max(\tau(j),\tau({j-1}))}$.  Let $E_{\tau,i}= \{j: x_i\in |\Delta \ov{x}_{\tau(j)}|\} = \{j: \min(\tau(j),\tau({j+1})) < i \leq \max(\tau(j),\tau({j+1}))\}$ be the sequence of indices $j$ such that $x_i$ appear in $|\Delta  \ov{x}_{\tau(j)}| $.  Further, let 
\[w_{\tau}(c[k])= \frac{1}{2^k} \prod_{i=1}^k \frac{\sqrt{2c_i}}{ F_{\tau,i}(c) }\] 
and $F_\tau(c[k])=(F_{\tau,i}(c[k]),i \in\cro{1,k})$ where
\begin{eqnarray}\label{eq:FW}
F_{\tau,i}(c[k]) &=&\sum_{j \in E_{\tau,i}} \sqrt{2 c_j}.
\end{eqnarray}
As a consequence of the previous discussion, 
\begin{lem}\label{lem:transfo} If  $f$ is the map $f(x[k])=\prod_{i=1}^k \Exp[\lambda_i,x_i]$ for some fixed $\lambda[k]\in(0,+\infty)^k$, then 
  \begin{displaymath}
    \Op_k(f)(x[k]) = \sum_{\tau\in {\cal S}\cro{0,k}} w_{\tau}(\lambda[k]) \prod_{i=1}^k \Exp\l[ F_{\tau,i}(\lambda),x_i\r].  \end{displaymath}
\end{lem}
Of course, this ends the proof of  Proposition \ref{eq:lin}. 
\end{proof}
In the $2$-dimensional case, the 6 functions $F_\tau$ and weights are the following
\begin{eqnarray}
F_{(0,1,2)}(c_1,c_2) = (s_1,s_2) & , & w_{(0,1,2)}(c_1,c_2) = {1}/{4}, \\
F_{(0,2,1)}(c_1,c_2) = (s_1,s_1+s_2) & ,& w_{(0,2,1)}(c_1,c_2) =  1/4 \ \ s_2/(s_1+s_2) , \\
F_{(1,0,2)}(c_1,c_2) = (s_1+s_2,s_2) & , & w_{(1,0,2)}(c_1,c_2) = 1/4 \ \   s_1/(s_1+s_2), \\
F_{(1,2,0)}(c_1,c_2) = (s_2,s_1+s_2) & , & w_{(1,2,0)}(c_1,c_2) = 1/4 \ \ s_1/(s_1+s_2), \\
F_{(2,0,1)}(c_1,c_2) = (s_1+s_2,s_1) & , & w_{(2,0,1)}(c_1,c_2) = 1/4 \ \ s_2/(s_1+s_2), \\
F_{(2,1,0)}(c_1,c_2) = (s_2,s_1) & , & w_{(2,1,0)}(c_1,c_2) = {1}/{4},
\end{eqnarray}
where for short, we have written $s_{i}$ instead of$\sqrt{2c_i}$. 
We now pass to the consequences in terms of iterated BM ad libitum, reflected BM, and in the case of iterated BM.

\subsection{Iteration of BM ad libitum }
\label{sec:BMal}

Proposition \ref{pro:C-K}, ensures the convergence of $(I^{(n)}(t_i), \iuk)$ to $I[k]$ for any distinct and non zeros $t_i$'s, as well as the exchangeability of the limit. Hence $\gaps{I[k]}$ is the limit of the gap MC, and the limit of this MC does not depend on the $t_i$'s. 
The gaps sequence is not sufficient to describe $I[k]$ even up to a permutation (which would be uniform by exchangeability), since the gaps sequence determines the set of elements of the sequence up to a translation. We present a simple trick which allows one to pass this (apparent) difficulty. 

Consider $I[k+1]$ a $\mu_{k+1}$ distributed sequence. Take $U$ a r.v. uniform in $\cro{1,k+1}$ independent from the $I_i$'s. By  Proposition \ref{pro:C-K}
\ben\label{eq:dsqgr}
J[k+1]:=(I_i-I_U, i \in \cro{1,k+1})
\een
is a random sequence with one zero entry (with uniform position), and the rest of its entries has the same distribution as $(I_i,1\leq i \leq k)$. 
Moreover, $\gaps{J[k+1]}=\gaps{I[k+1]}$ since translations conserve gaps sequence.

Denote by $\gamma_k$ be the distribution of $\gaps{I[k+1]}$. The following proposition, consequence of the previous discussion, allows one to get $\mu_k$ using $\gamma_k$. 
\begin{pro} \label{pro:gaps sequenceToISP}
Consider $(G_i,\iuk)$ a random vector distributed according to $\gamma_k$,  $U$ a uniform r.v. on $\cro{0,k}$ and  $\tau$ a uniform random permutation taken in ${\cal S}\cro{0,k}$, all these r.v. being independent.
The following identity holds $(\overline{G}_{\tau(i)}-\overline{G}_U, 0 \leq i \leq k)\eqd J[k+1].$
\end{pro}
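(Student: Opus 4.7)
The plan is to realize $J[k+1]$ explicitly in terms of the gaps sequence $G=\gaps{I[k+1]}$ and auxiliary uniform randomness, using the exchangeability of $\mu_{k+1}$. After reindexing so that both $I[k+1]$ and $J[k+1]$ are indexed by $\cro{0,k}$, Proposition \ref{pro:C-K}(1) tells us that $\mu_{k+1}$ is exchangeable, so conditionally on $\sort{I[k+1]} = (\wh{I}_0,\dots,\wh{I}_k)$ one can write $I_i = \wh{I}_{\pi(i)}$ with $\pi$ a uniform element of ${\cal S}\cro{0,k}$, independent of $\sort{I[k+1]}$.

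\textbf{Main computation.} By definition of the gaps, $\wh{I}_j = \wh{I}_0 + \overline{G}_j$ for $j \in \cro{0,k}$, where $G \sim \gamma_k$. Setting $V:=\pi(U)$, substitution gives
\[
J_i \;=\; I_i - I_U \;=\; \wh{I}_{\pi(i)} - \wh{I}_{\pi(U)} \;=\; \overline{G}_{\pi(i)} - \overline{G}_{V}, \qquad i \in \cro{0,k}.
\]
This already matches the form of the right-hand side of the proposition. To conclude, I need to show that $(G,V,\pi)$ are mutually independent with $G \sim \gamma_k$, $V$ uniform on $\cro{0,k}$ and $\pi$ uniform on ${\cal S}\cro{0,k}$.

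\textbf{Independence check.} Independence of $G$ from $(U,\pi)$ is immediate: $G$ is a function of $\sort{I[k+1]}$, which is independent of $\pi$ by exchangeability and of $U$ by construction. The only slightly delicate point is the joint law of $(\pi,V)$, since $V$ is built from $\pi$ and $U$. A one-line computation, using independence of the uniforms $(U,\pi)$, gives
\[
`P(\pi = \pi_0, V = v) \;=\; `P(\pi = \pi_0)\,`P(U = \pi_0^{-1}(v)) \;=\; \frac{1}{(k+1)!}\cdot\frac{1}{k+1},
\]
so $V$ is uniform on $\cro{0,k}$ and independent of $\pi$. Consequently $(G,V,\pi)$ has the same law as $(G,U,\tau)$ from the statement, and applying the deterministic map $(g,v,\sigma)\mapsto(\overline{g}_{\sigma(i)} - \overline{g}_v,\, i\in\cro{0,k})$ to both triples yields the claimed distributional equality. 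No step poses a real obstacle; the main delicate point is the independence of $V$ and $\pi$ handled above.
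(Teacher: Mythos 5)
Your proof is correct and fills in precisely the details the paper leaves implicit by calling the proposition a ``consequence of the previous discussion'': you use exchangeability and almost-sure distinctness of the coordinates of $I[k+1]$ to factor its law into (sorted values giving $G\sim\gamma_k$) $\times$ (uniform permutation $\pi$), and then show $(G,\pi(U),\pi)\eqd(G,U,\tau)$. This is the argument the paper intends; the one genuinely delicate point, the independence of $V=\pi(U)$ from $\pi$, you handle correctly.
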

It remains to describe $\gamma_k$. The case $k=1$ is a consequence of Proposition \ref{pro:C-K}, see also Remark \ref{rem:1}. 
For $k\geq 2$, this can be obtained by looking at the limit of the gap MC, starting with some initial positive gaps sequence $g[k]$ since, the gap MC inherits from the initial chain (the iterated BM) the property to possess a limiting distribution, independent from the starting point. Clearly, the initial sequence can be taken random (with values in $`R^{+}{}^k$), for example, one can start with some independent exponential r.v. with parameters $\lambda_1,\dots,\lambda_k$... and this is what we will do since Proposition \ref{eq:lin} and Lemma \ref{lem:transfo} allows one to control exactly the evolution of the distribution of the gaps sequence MC in this case. 

Finding the limiting distribution in this case amounts to finding the fixed point of $\Op_k$.

From  Proposition \ref{eq:lin} and Lemma \ref{lem:transfo} one sees that $\Op_k$ sends an element of $\MEX_k$ on a weighted sums of elements of $\MEX_k$, where the total weight is 1: it is a Markov kernel. It can be better understood if instead of seeing the action of $\Op_k$ at the level of functions, it is seen at the level of the parameters (the parameters of the involved exponential distributions): consider a (discrete time homogeneous) MC $(Z^{(n)}[k]=(Z^{(n)}(1),\dots,Z^{(n)}(k)), n \geq 0)$
 defined on $\mathbb{R}^{\star}{}^k$ whose kernel $Q$ is defined, for any Borelian $A$ of $\mathbb{R}^k$, and $\lambda[k] \in \mathbb{R}^{\star}{}^k$ by
\ben\label{eq:parameter-kernel}
Q ( \lambda[k],A)=\sum_{\tau\in {\cal S}\cro{0,k}} w_{\tau[k]}(\lambda) \delta_{F_{\tau}(\lambda[k])}(A). \een
In other words, 
\ben\label{eq:MC-parameter}
`P\l( Z^{(n+1)}[k]=F_{\tau}(\lambda[k])~|~ Z^{(n)}[k]=\lambda[k]\r))=  w_\tau(\lambda[k])  \textrm{ for any }\tau\in {\cal S}\cro{0,k}.\een 
If $Z^{(n)}[k]\sim \nu$, denote by $\nu Q$ the distribution of $Z^{(n+1)}[k]$. We can sum up the preceding consideration as follows:
\begin{pro}\label{pro:etsdy} Assume that the gaps sequence $(\G^{(n)}(i),\iuk)$ at time $n=0$ is a mixture of exponential distributions with density $g_0$ and parameter law $\nu^{(0)}$, then $\nu^{(0)} Q$ is the parameter law of $\Op_k(f_0)$. More generally $\nu^{(0)} Q^n$ is the parameter law of $f_n=\Op_k^{{(n)}}$, the density of  $(\G^{(n)}(i),\iuk)$. 
\end{pro}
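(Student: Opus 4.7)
The plan is to combine Lemma \ref{lem:transfo} with the linearity of $\Op_k$ on $\MEX_k$ (Proposition \ref{eq:lin}) and then to recognize the resulting expression as the integral of the exponential kernel against $\nu^{(0)} Q$. The key fact is that $\Op_k$ is an integral operator (see \eref{eq:Opk}), so Fubini allows us to commute it past the mixing integral.

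First, I would write the density $f_0$ of $\G^{(0)}[k]$ according to the definition of $\MEX_k$,
\begin{equation*}
f_0(x[k]) = \int_{`R^+{}^k} \prod_{i=1}^{k} \Exp[\lambda_i, x_i] \, d\nu^{(0)}(\lambda[k]).
\end{equation*}
Applying $\Op_k$ and exchanging the two integrations (which is legitimate since all integrands are nonnegative), I would obtain
\begin{equation*}
\Op_k(f_0)(x[k]) = \int_{`R^+{}^k} \Op_k\!\Big(\prod_{i=1}^{k} \Exp[\lambda_i, \cdot]\Big)(x[k]) \, d\nu^{(0)}(\lambda[k]).
\end{equation*}
Lemma \ref{lem:transfo} evaluates the inner expression explicitly, giving
\begin{equation*}
\Op_k(f_0)(x[k]) = \int_{`R^+{}^k} \sum_{\tau \in {\cal S}\cro{0,k}} w_\tau(\lambda[k]) \prod_{i=1}^{k} \Exp[F_{\tau,i}(\lambda[k]), x_i] \, d\nu^{(0)}(\lambda[k]).
\end{equation*}

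Next I would compare this with the integral of $h(\mu) = \prod_{i=1}^k \Exp[\mu_i, x_i]$ against the measure $\nu^{(0)} Q$. Using the definition \eref{eq:parameter-kernel} of $Q$, for any test function $h$,
\begin{equation*}
\int h(\mu)\, d(\nu^{(0)} Q)(\mu) = \int \Big(\sum_{\tau} w_\tau(\lambda[k]) \, h(F_\tau(\lambda[k]))\Big) d\nu^{(0)}(\lambda[k]).
\end{equation*}
Choosing $h(\mu) = \prod_i \Exp[\mu_i, x_i]$ recovers exactly the right-hand side of the previous display, so $\Op_k(f_0)$ is the mixture of products of exponentials with parameter law $\nu^{(0)} Q$. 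This proves the first assertion.

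The second assertion follows by a trivial induction on $n$: since $\Op_k$ sends $\MEX_k$ into $\MEX_k$ (Proposition \ref{eq:lin}), we may apply the first step iteratively, obtaining at each step that the parameter law of $\Op_k^{(n+1)}(f_0)$ is $(\nu^{(0)} Q^n) Q = \nu^{(0)} Q^{n+1}$. No real obstacle is expected: the whole argument is an unwinding of definitions, and the only nontrivial ingredient, Lemma \ref{lem:transfo}, has already been established. The most delicate point is simply the bookkeeping needed to make sure that the action of $\Op_k$ on a mixture of product exponentials and the action of the Markov kernel $Q$ on parameters yield the same weighted sum, which is verified by the direct comparison above.
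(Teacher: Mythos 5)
Your proof is correct and follows exactly the route the paper intends: the paper does not give a separate proof of Proposition~\ref{pro:etsdy}, presenting it instead as a direct "summing up" of Proposition~\ref{eq:lin}, Lemma~\ref{lem:transfo}, and the definition of $Q$ in \eref{eq:parameter-kernel}. You simply spell out the Fubini exchange and the matching with $\nu^{(0)}Q$ that the paper leaves implicit, and the induction for $n\geq 1$ is the obvious one.
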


We now conclude by discussing the asymptotic behaviour of the parameter law MC.
\begin{pro} \label{pro:ap}
(1) $Q$ is ergodic in $[2,2k^2]^k$ (meaning that for any $\nu_k^{(0)}$ having its support in  $[2,2k^2]^k$,  $\nu_k^{(0)}Q^n$ converges weakly when $n\to +\infty$ to a distribution $\nu_k$ independent from $\nu_k^{(0)}$). \\ 
(2) The probability density $g$ whose parameter law is $\nu_k$ is solution to $\Op_k(g)=g$.
\end{pro}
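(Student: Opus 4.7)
The plan has four ingredients: stability of $\mathcal{K} := [2, 2k^2]^k$ under $Q$; existence of an invariant measure via compactness; uniqueness via the convergence of the gap MC from Proposition \ref{pro:C-K} combined with injectivity of the parameter-to-density map $\nu \mapsto g_\nu$; part (2) then follows directly.

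For stability, recall $F_{\tau,i}(\lambda) = \sum_{j \in E_{\tau,i}} \sqrt{2\lambda_j}$. The sequence $(\tau(0), \ldots, \tau(k))$ takes every value in $\cro{0,k}$, in particular $0$ and $k$; so for each $i \in \cro{1,k}$ there is at least one $j$ with $\tau(j-1)$ and $\tau(j)$ on opposite sides of $i - 1/2$, i.e.\ $|E_{\tau,i}| \geq 1$, which together with $\lambda_j \geq 2$ gives $F_{\tau,i}(\lambda) \geq 2$. Combined with the trivial bound $|E_{\tau,i}| \leq k$ and $\lambda_j \leq 2k^2$, one also obtains $F_{\tau,i}(\lambda) \leq k\sqrt{4k^2} = 2k^2$. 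Compactness of $\mathcal{K}$ makes $\mathcal{P}(\mathcal{K})$ weakly compact (Prokhorov), and the continuity of the $w_\tau, F_\tau$ on $\mathcal{K}$ makes $\nu \mapsto \nu Q$ weakly continuous; hence every subsequential weak limit of $(\nu_k^{(0)} Q^n)_n$ is an invariant probability measure supported in $\mathcal{K}$.

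For uniqueness, let $\nu$ be any invariant probability measure on $\mathcal{K}$. By Proposition \ref{pro:etsdy}, $\Op_k(g_\nu) = g_\nu$, so the gap MC started from $\G^{(0)}$ with density $g_\nu$ is stationary. On the other hand, by Proposition \ref{pro:C-K}(1) (applied conditionally on $\G^{(0)}$), the gap MC converges in distribution to $\gaps{I[k+1]}$ from any starting distribution. A stationary chain that converges in distribution is already distributed as the limit, so the distribution with density $g_\nu$ coincides with $\gaps{I[k+1]}$; in particular $g_\nu$ is uniquely determined. Finally, the map $\nu \mapsto g_\nu$, with $g_\nu(x[k]) = \int_{\mathcal{K}} \prod_i \lambda_i e^{-\lambda_i x_i} d\nu(\lambda[k])$, is injective on $\mathcal{P}(\mathcal{K})$: iterated partial derivatives at $x=0$ recover all mixed moments $\int \prod_i \lambda_i^{a_i + 1} d\nu$, and a measure on the compact set $\mathcal{K} \subset (0, \infty)^k$ is determined by its moments. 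This yields uniqueness of the invariant measure $\nu_k$; together with compactness of $\mathcal{P}(\mathcal{K})$, subsequential weak convergence upgrades to full weak convergence $\nu_k^{(0)} Q^n \to \nu_k$, and part (2) is the direct translation of $\nu_k Q = \nu_k$ through \eref{eq:par-law2}.

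The main obstacle is uniqueness: $Q$ is not obviously contracting on $\mathcal{P}(\mathcal{K})$, so a direct coupling at the parameter level looks delicate. The idea is to import uniqueness from the convergence of the gap MC (Proposition \ref{pro:C-K}) and lift it back to the parameter level through the injective Laplace-type transform $\nu \mapsto g_\nu$.
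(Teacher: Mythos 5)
Your stability bound for $[2,2k^2]^k$, the compactness argument, and the injectivity of $\nu\mapsto g_\nu$ via the moment problem on a compact set are all correct, as is the observation that any $Q$-invariant law must produce the density of $\gamma_k$. The genuine gap is the sentence asserting that every subsequential weak limit of $(\nu_k^{(0)}Q^n)_n$ is $Q$-invariant. Feller continuity plus compactness of the state space do not give this; it already fails for an irrational rotation of the circle started from a Dirac mass, a Feller chain on a compact space whose unique invariant measure is Lebesgue while every subsequential limit of the iterates is a non-invariant point mass. What Feller plus compactness do yield, via Krylov--Bogoliubov, is invariance of subsequential limits of the Cesàro averages $\frac{1}{n}\sum_{m<n}\nu_k^{(0)}Q^m$, not of the iterates themselves, and uniqueness of the invariant measure is not, by itself, enough to force convergence of $\nu_k^{(0)}Q^n$.

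The paper closes this step by identifying each accumulation point directly rather than through invariance, and the repair fits onto your setup. Along a converging subsequence $\nu_k^{(n_j)}\to\nu^*$, boundedness and continuity of $\lambda\mapsto\prod_i\Exp[\lambda_i,x_i]$ on $[2,2k^2]^k$ give pointwise convergence $f_{n_j}\to g_{\nu^*}$; Scheff\'e's theorem upgrades this to weak convergence of the corresponding gap laws $\eta_{n_j}$; and since the full sequence $\eta_n$ converges to $\gamma_k$ by Proposition \ref{pro:C-K}(1), one deduces that $g_{\nu^*}$ is the density of $\gamma_k$ regardless of the subsequence. Your injectivity argument then forces all accumulation points of $(\nu_k^{(0)}Q^n)_n$ to coincide with a single $\nu_k$, so the full sequence converges, and part (2) follows from Feller continuity applied to $\nu_k^{(n)}\to\nu_k$ and $\nu_k^{(n+1)}\to\nu_k$. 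Your separate uniqueness-of-invariant-measure lemma is a sound observation, but it is a detour that does not, on its own, close the argument.
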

\begin{proof} 
We prove the two statements. Let ${\cal M}(S)$ be the set of probability measures with support in $S$.
Since the compact set $[2,2k^2]^k$ is stable by any $F_{\tau}$, then  ${\cal M}([2,2k^2]^k)$ is stable by $Q$. 
Take $\nu^{(0)}_k$  in ${\cal M}([2,2k^2]^k)$, and being the parameter law of some function $f_0$. 
Hence, the sequence $(\nu^{(n)}_k:=\nu^{(0)}_kQ^{n},n\geq 0)$  possesses an accumulation point $\nu_k$ in the compact ${\cal M}([2,2k^2]^k)$. Consider a converging subsequence, still denoted $\nu^{(n)}_k$. 
 Recall that $\nu^{(n)}_k$ is the parameter law of $f_n:=\Op_k^{{(n)}} (f_0)$.  
For any fixed $x[k]$, the map $\lambda[k]\to \prod_{i=1}^k\Exp\l[ \lambda_i,x_i\r]$ is bounded continuous on $`R^+{}^k$, therefore $\nu^{(n)}_k\to \nu_k$ implies that for any fixed $x[k]\in(0,+\infty)^k$,
\ben\label{eq:schqfz}
f_n(x[k])=\int \prod_{i=1}^k \Exp\l[ \lambda_i,x_i\r] d\nu^{(n)}_k(\lambda[k])\to f(x[k]):=\int \prod_{i=1}^k \Exp\l[ \lambda_i,x_i\r] d\nu_k(\lambda[k]).\een 
The fact that $f$ is a density can be checked by Fubini. 
Denote by $\eta_n$ the distribution on $`R^k$ whose density is $f_n$ and by $\eta$ the one whose density is $f$. 
By Scheffé's theorem, the simple convergence \eref{eq:schqfz} implies the convergence of $\eta_n$ to $\eta$.

This implies $\eta=\gamma_k$ (by uniqueness of the limit of the gaps sequence Markov chain), and then $f$ coincides with 
$\lim_n \Op_k^{(n)}(f_0)$. By Proposition \ref{pro:etsdy}, $f$ is the density  of $\gamma_k$.
We must add that a function $f$ in $\MEX$ possesses a unique parameter law, which implies that $\nu_k^{(0)}Q^{n}$ possesses a unique accumulation point, and then converges in distribution. The uniqueness of the parameter law comes from \eref{eq:par-law2}, where one sees that if $\nu$ is the parameter law of $f$, then $f$ is the Laplace transform of the measure $\l(\prod_{i=1}^k \lambda_i\r) \nu(\lambda[k])$.
\end{proof}

\begin{rem}
Take a bounded continuous function $f:`R^k\to `R$. Our representation of the gaps sequence distribution of IBM permits to calculate $`E({f(G[k])})$ under $\gamma_k$ and to give a representation using $\nu_k$ only:
\ben \label{eq:efg}
`E({f(G[k])}) & =& \int_{\reals_+^k} f(x[k]) d \gamma_k(x[k]) \\
& = &\int_{\reals_+^k} \l(\int_{\reals_+^k} f(x[k]) \prod_{j=1}^k \lambda_j e^{-\lambda_j x_j} dx_j   \r) d \nu_k(\lambda[k])
\een
hence, it appears clearly that $`E({f(G[k])})$ can be computed thanks to the parameter distribution $\nu_k$ only.
More generally, using Proposition \ref{pro:gaps sequenceToISP}, one can use this formula to compute $`E({f(I[k])})$ to, which can then also be expressed in terms of $\nu_k$ only. \end{rem}

MCs with kernel such as $Q$, that is, which relies on successive applications of a functions $F_\tau$, where $F_\tau$ is taken at random in a set of functions ${\cal F}=(F_\tau,\tau \in {\cal S}\cro{0,k})$ depending (or not) of the current position, are called iterated function system (IFS) in the literature~\cite{BD85,BDEG88,F04}.

Here since $\Theta^{(0)}_k:=[2,2k^2]^k$ is stable by all the $F_\tau$ (for $\tau \in {\cal S}\cro{0,k}$), it is easily seen that for 
\[\Theta^{(n)}_k:=\bigcup_{\tau\in {\cal S}\cro{0,k}} F_\tau(\Theta_k^{(n-1)}),\] 
the sequence $(\Theta_k^{(n)},n\geq 0)$  is a sequence of non increasing compact sets whose (non empty) limit is a compact $\Theta_k$.
Using the portmanteau theorem and the fact that $n\mapsto \Theta^{(n)}_k$ is decreasing for the inclusion partial order (see Figure  \ref{fig:IFS} for a representation of $\Theta_2$) we can establish that for any $k\geq 1$,
$\Theta_k\supset {\sf{Support}}(\nu_k)$. 
\begin{figure}
\centerline{\includegraphics[width = 6cm]{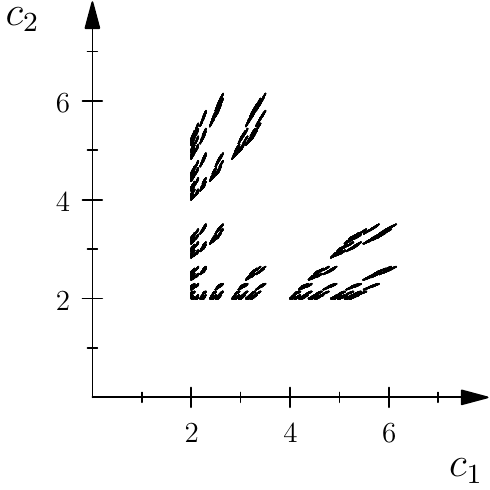}}
\caption{\label{fig:IFS}The support of $\nu_2$ computed by a program.}
\label{fig:support}
\end{figure}
\begin{lem}
$\Theta_2={\sf support}(\nu_2)$.
\end{lem}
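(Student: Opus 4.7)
The inclusion $\Theta_2\supset{\sf support}(\nu_2)$ has already been established above; the task is to prove $\Theta_2\subset{\sf support}(\nu_2)$. The plan is the classical iterated function system argument: for a contractive IFS whose selection weights are bounded below, every point of the attractor lies in the support of the invariant distribution.

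First, I would record two uniform bounds on the compact stable set $\Theta_2^{(0)}=[2,8]^2$. \emph{Contractivity:} the six explicit maps $F_\tau$ displayed above each send $[2,8]^2$ into itself, and a direct computation of their (at worst) $2\times 2$ Jacobians shows that every partial derivative $\partial F_{\tau,i}/\partial c_\ell$ is either $0$ or of the form $1/\sqrt{2c_\ell}\le 1/2$, so the operator norms $\|DF_\tau(c)\|_{\text{op}}$ admit a common uniform bound $\rho<1$ on $[2,8]^2$; by convexity of $[2,8]^2$, each $F_\tau$ is then $\rho$-Lipschitz. \emph{Positive weights:} each $w_\tau(c_1,c_2)$ is continuous and strictly positive on the compact $[2,8]^2$, so $\delta:=\min_\tau\inf_{[2,8]^2}w_\tau>0$.

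Next, fix $\lambda^*\in\Theta_2$. Unfolding the recursion $\Theta_2^{(n)}=\bigcup_\tau F_\tau(\Theta_2^{(n-1)})$ with the Markov chain convention on indexing gives
\[\Theta_2^{(n)}=\bigcup_{(\tau_1,\dots,\tau_n)\in{\cal S}\cro{0,2}^n} F_{\tau_n}\circ\cdots\circ F_{\tau_1}\l([2,8]^2\r),\]
and by contractivity each block on the right has diameter at most $\rho^n\cdot\text{diam}([2,8]^2)$. Applying König's lemma to the finitely branching tree whose depth-$n$ nodes are the blocks containing $\lambda^*$ yields an infinite sequence $(\tau_1,\tau_2,\dots)$ with $\lambda^*\in F_{\tau_n}\circ\cdots\circ F_{\tau_1}([2,8]^2)$ for every $n$. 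Given $`e>0$, pick $n$ with $\rho^n\cdot\text{diam}([2,8]^2)<`e$; for any starting point $\lambda_0\in[2,8]^2$, running the chain with that specific sequence of transitions places $Z^{(n)}=F_{\tau_n}\circ\cdots\circ F_{\tau_1}(\lambda_0)$ inside $B(\lambda^*,`e)$, and this sequence occurs with probability at least $\delta^n$. Hence $Q^n(\lambda_0,B(\lambda^*,`e))\ge \delta^n>0$ uniformly in $\lambda_0\in[2,8]^2$. Invariance $\nu_2=\nu_2Q^n$ and ${\sf support}(\nu_2)\subset[2,8]^2$ then give
\[\nu_2(B(\lambda^*,`e))=\int Q^n(\lambda_0,B(\lambda^*,`e))\,d\nu_2(\lambda_0)\ge \delta^n>0,\]
so $\lambda^*\in {\sf support}(\nu_2)$, which completes the reverse inclusion.

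The main obstacle is the uniform contractivity bound: the two permutation maps $F_{(0,1,2)}$ and $F_{(2,1,0)}$ have diagonal Jacobians and are trivially $\tfrac12$-Lipschitz, but the four maps involving a sum coordinate (such as $F_{(0,2,1)}$ with $DF=\bigl(\begin{smallmatrix}1/s_1&0\\1/s_1&1/s_2\end{smallmatrix}\bigr)$) have non-diagonal Jacobians, and bounding their largest singular values uniformly on $[2,8]^2$ requires a short case by case computation—the worst case occurs at $s_1=s_2=2$, where the singular value is $\sqrt{(3+\sqrt 5)/8}<1$. A secondary but easy subtlety is keeping the indexing convention of the address sequence aligned with the Markov chain composition order, which is why I wrote both in the form $F_{\tau_n}\circ\cdots\circ F_{\tau_1}$ above.
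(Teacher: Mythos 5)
Your proof is correct and takes a genuinely different, more elementary route than the paper. The paper's proof appeals to Hutchinson's theorem (Hausdorff convergence of $\Theta^{(n)}_2 \to \Theta_2$), starts the parameter chain from $\nu^{(0)}=\delta_{(2,2)}$, shows mass reaches $B(x,`e)$, then uses the fact that $(2,2)$ is the fixed point of $F_{(0,1,2)}$ to ``pull back'' the invariant measure near $(2,2)$, and finally invokes uniform continuity to transfer the conclusion from $\delta_{(2,2)}$ to nearby starting distributions. Your argument is shorter and cleaner: you make the contraction quantitative (a uniform rate $\rho<1$ on $[2,8]^2$) together with a uniform positive lower bound $\delta$ on the weights, and then use those to show directly that $Q^n(\lambda_0,B(\lambda^*,`e))\ge\delta^n$ \emph{uniformly} in $\lambda_0\in[2,8]^2$; invariance $\nu_2=\nu_2Q^n$ does the rest in one line. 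This bypasses the detour through $(2,2)$ and the uniform-continuity step entirely, and is closer to the standard proof that the attractor equals the support of the stationary measure for a contracting IFS with weights bounded below.

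One minor blemish: the appeal to K\"onig's lemma is both slightly misapplied and unnecessary. With your ``Markov chain'' indexing $F_{\tau_n}\circ\cdots\circ F_{\tau_1}$, the blocks are \emph{not} nested as $n$ increases (extending $(\tau_1,\dots,\tau_n)$ to $(\tau_1,\dots,\tau_{n+1})$ changes the block to $F_{\tau_{n+1}}\circ F_{\tau_n}\circ\cdots\circ F_{\tau_1}([2,8]^2)$, which is not a subset of the previous one), so the good addresses do not form a subtree and K\"onig's lemma does not directly yield a single infinite sequence with the stated property. The nesting needed for K\"onig is the opposite (IFS) convention, in which the new index goes innermost. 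But none of this is needed: since $\lambda^*\in\Theta^{(n)}_2$ for every $n$, for each fixed $n$ large enough that $\rho^n\,\text{diam}([2,8]^2)<`e$ you may simply choose \emph{some} length-$n$ address $(\tau_1,\dots,\tau_n)$ with $\lambda^*\in F_{\tau_n}\circ\cdots\circ F_{\tau_1}([2,8]^2)$, and run that finite sequence of transitions. The addresses used for different $n$ need not be consistent. With that simplification your argument is watertight; the contractivity bound $\sqrt{(3+\sqrt 5)/8}<1$ at $s_1=s_2=2$ is correct and is indeed the worst case over $[2,8]^2$.
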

\begin{proof}
For $k=2$, it is easily seen that all the $F_\tau$ (given in \eref{eq:FW}) are contracting in $`R^2$ equipped with the Euclidean distance. 
Following classical theorems (e.g. Hutchinson \cite[section 3]{H81}) it turns out that $\Theta_k^{(n)}$ converges to $\Theta_k$ for the Hausdorff metric for any starting set $\Theta0 \subset [2,8]^2$ (and not only from $[2,8]^2$ as stated above). In particular, imagine that $\Theta^{(0)}_2=\{(2,2)\}$, and that the starting measure is $\nu^{(0)}=\delta_{(2,2)}$. Recall that $\nu^{(n)}\to \nu_2$ (since the convergence of  $\nu^{(n)}\to \nu_2$ holds for any starting distribution $\nu^{(0)}$ whose marginals own no atom at $0$). \par
Take any $x\in \Theta_2$, any $`e >0$. By Hutchinson's result, for $n$ large enough $\Theta^{(n)}_2\cap B(x,`e)\neq \varnothing$, which means, taken into account the positivity of the $w_{\tau}'s$, that $ \nu^{(0)}Q^{n} (B(x,`e))>0$: some mass has been transported in a neighbourhood of $x$ in $n$ steps from ${(2,2)}$. 
This is a first step in our proof that $x\in {\sf Support}(\Theta_2)$.
Now, observe that for any $\rho>0$, there exists $m\geq 1$ such that
\[F_{0,1,2}^{\circ m}([2,8]^2)\subset B((2,2),\rho),\]
implying that $m$ iterations of $F_{0,1,2}$ (see \eref{eq:FW}), bring back all the mass (that is 1) in a neighbourhood of $(2,2)$. The probability to proceed to these iterations of $F_{0,1,2}$ is positive (since $\inf_{(c_1,c_2)\in [2,8]^2} w_{0,1,2}(c_1,c_2) >0$). Now, since all the functions $F_{\tau}$ are uniformly continuous on the compact $[2,8]$, for $\rho$ small enough, any distribution $\nu^{(0)}{}'$ with support included in $ B((2,2),\rho)$ will also satisfy $ \nu^{(0)}{}'Q^{n} (B(x,2`e))>0$. \end{proof}
\begin{rem}\label{rem:Hutch}
Hutchinson \cite[section 3]{H81} characterises the set $\Theta_2$: it is the closure of the set of fixed points of the functions $(F_{\tau_1}\circ \dots \circ F_{\tau_m}, m\geq 1, \tau_i \in {\cal S}\cro{0,2})$.
\end{rem}

Using \cite{BDEG88} and some analysis, for $k=3$, the IFS with place-dependent probabilities is contracting in average for the  $\|.\|_2$ distance. This can be proved by computing the Jacobian matrices $J_\tau(c[k])=(\frac{\partial F_{\tau,i}(c[k])}{\partial c_j})_{1\leq i \leq k}$ of the $F_\tau$'s, and by proving that their norms $N_\tau(c[k]):=\sup_{\rho\neq 0} \|\rho J_\tau(c[k])\|_2/\|\rho\|_2$ satisfies
 $\sum_\tau w_\tau(c[k]) \log (N_\tau(c[k]))<0$ (this can be proved by taking first some bounds on the $w_\tau$, and then using the $\log (N_\tau(c[k]))\leq \log (N_\tau(2,\dots,2))$. From Theorem 1.2 in  \cite{BDEG88}, $\nu_3$ is of pure type, atomic, or absolutely continuous.

We think that the same results can be proved with additional work for $k=4$, but for $k\geq 5$, other methods should be involved since the average contraction property seems to fail.
We were not able to find in the literature any general results allowing one to prove the identification of $\Theta_k$ with ${\sf Support}( \mu_k)$ or to compute the Hausdorff dimension of this support. We conjecture that for any $k\geq 1$, ${\sf Support}(\nu_k)$ coincides with $\Theta_k$ , and that the Lebesgue measure of this support (or of $\Theta_k$) is 0.

Now, we describe the distribution of the gaps sequence of the IBM thanks to $\nu_k$.
\begin{theo} \label{theo:multi-dim} Let $k$ be an integer larger than 0. If $(G_i,\iuk)$ is a random vector with distribution $\gamma_k$, then
\begin{equation}
(G_i,\iuk) \eqd (C_1 E_1,C_2 E_2,\dots,C_k E_k)
\end{equation}
where the $E_i$'s are i.i.d., $\Exp[1]$ distributed,  independent from $C[k]$, a random vector of law $\nu_k$.
\end{theo}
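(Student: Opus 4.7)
The plan is to read the identity off the representation of the density of $\gamma_k$ obtained in (and during the proof of) Proposition \ref{pro:ap}. By Proposition \ref{eq:lin} and Lemma \ref{lem:transfo}, $\MEX_k$ is stable under $\Op_k$, and by Proposition \ref{pro:etsdy}, if one starts the gaps sequence MC at a density $f_0\in\MEX_k$ with parameter law supported in $[2,2k^2]^k$, the $n$th iterate $f_n=\Op_k^{(n)}(f_0)$ lies in $\MEX_k$ with parameter law $\nu^{(0)}Q^n$. Proposition \ref{pro:ap} then yields $\nu^{(0)}Q^n\Rightarrow\nu_k$, and, through the pointwise convergence \eref{eq:schqfz} together with Scheffé's theorem, identifies the density $g$ of $\gamma_k$ as
\[
g(x[k])=\int_{`R^+{}^k}\prod_{i=1}^k \lambda_i e^{-\lambda_i x_i}\,d\nu_k(\lambda[k]).
\]

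The second step is to recognise this integral formula as the density of the following two-stage procedure: sample a rate vector $\Lambda[k]\sim\nu_k$, then, conditionally on $\Lambda[k]$, sample the $G_i$'s independently with $G_i\mid\Lambda\sim\Expo[\Lambda_i]$. Since an $\Expo[\lambda]$ r.v. can be written as $E/\lambda$ with $E\sim\Exp[1]$, setting $E_i=\Lambda_i G_i$ shows that the $E_i$'s are i.i.d. $\Exp[1]$, independent of $\Lambda[k]$, and that $G_i = C_i E_i$ with $C_i=1/\Lambda_i$. Interpreting $\nu_k$ in the statement as the law of the scaling vector $C[k]$ (equivalently, the image of the rate-parameter law under coordinatewise inversion) then gives exactly the claimed equality in distribution.

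There is no real obstacle: the theorem is a probabilistic restatement of the fixed-point identity $\Op_k(g)=g$ read through the mixture structure of $\MEX_k$. The only subtle point is the well-definedness of $\nu_k$, i.e.\ that the mixing measure attached to a density in $\MEX_k$ is unique; this is handled exactly as at the end of the proof of Proposition \ref{pro:ap}, where one observes that $g$ is (up to the factor $\prod_i\lambda_i$) the multivariate Laplace transform of $d\nu_k$, and hence determines $\nu_k$. Once the scale/rate bookkeeping is fixed, the statement follows.
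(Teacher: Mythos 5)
Your argument is correct and follows the route the paper intends: Theorem \ref{theo:multi-dim} is presented as an immediate corollary of Propositions \ref{pro:etsdy} and \ref{pro:ap} with no separate proof given, and your two-stage reading of the mixture density $\int \prod_i \Exp[\lambda_i,x_i]\,d\nu_k(\lambda[k])$ --- sample $\Lambda[k]\sim\nu_k$, then draw the $G_i$ conditionally independent with $G_i$ being $\Expo[\Lambda_i]$-distributed, and decouple via $E_i:=\Lambda_i G_i$ --- is exactly that corollary spelled out. You also rightly flag the scale/rate bookkeeping: with the paper's convention $\Exp[\lambda,x]=\lambda e^{-\lambda x}1_{x\geq 0}$ (rate $\lambda$), the construction yields $G_i=E_i/\Lambda_i$, so the vector $C[k]$ in the displayed identity must carry the law of $(1/\Lambda_1,\dots,1/\Lambda_k)$, i.e.\ the image of $\nu_k$ under coordinatewise inversion, rather than $\nu_k$ itself; equivalently, the identity should read $(G_i,\iuk)\eqd(E_1/C_1,\dots,E_k/C_k)$ with $C[k]\sim\nu_k$. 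This is the reading consistent with Proposition \ref{eq:mso}, whose bounds $E_i/(2k^2)$ and $E_i/2$ (for $\nu_k$ supported in $[2,2k^2]^k$) only make sense if $G_i=E_i/\Lambda_i$.
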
 
According to this theorem and Proposition \ref{pro:ap}, we may deduce the following multivariate stochastic order bounds for $\gamma_k$, which somehow, describe the repulsive-attractive property of the gaps sequence.
\begin{pro}\label{eq:mso} Let $k$ be an integer larger than 0 and $(G_i,\iuk)$ a random vector with distribution $\gamma_k$. For any bounded increasing function $h:`R^k\to `R$
\[`E(h(E_i/(2k^2),1\leq i \leq k)) \leq `E(h(G_i,1\leq i \leq k))\leq`E(h(E_i/2,1\leq i \leq k))\] 
where the $E_i$ are i.i.d.  random variables $\Expo[1]$ distributed. 
\end{pro}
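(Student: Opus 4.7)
My plan is to combine the mixture representation of Theorem \ref{theo:multi-dim} (equivalently the integral identity \eref{eq:efg}) with the compact support bound from Proposition \ref{pro:ap}(1), in order to realize $(G_i,\iuk)$ and the two bounding exponential vectors on a \emph{common} probability space, after which the inequalities follow from coordinate-wise monotonicity of $h$ (understood, as is standard for multivariate stochastic order, as non-decreasing in each argument).

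First I would construct the coupling. On an enlarged probability space carry i.i.d.\ $\Expo[1]$ variables $E_1,\dots,E_k$ and, independently of them, a random vector $\Lambda[k]=(\Lambda_1,\dots,\Lambda_k)\sim \nu_k$. Define $G_i:=E_i/\Lambda_i$ for $\iuk$. Conditionally on $\Lambda[k]=\lambda[k]$, the $G_i$'s are independent with $G_i\sim \Expo[\lambda_i]$, so the unconditional law of $(G_i,\iuk)$ has density $\int \prod_i \Exp[\lambda_i,x_i]\,d\nu_k(\lambda[k])$, which by \eref{eq:efg} (the integral representation of $\gamma_k$ via its parameter law) is exactly $\gamma_k$.

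Next I would invoke the support bound. The cube $[2,2k^2]^k$ is stable under every $F_\tau$ of \eref{eq:FW}: indeed $|E_{\tau,i}|\in\{1,\dots,k\}$, so for $c_j\in[2,2k^2]$ we have $\sqrt{2c_j}\in[2,2k]$ and hence $F_{\tau,i}(c[k])=\sum_{j\in E_{\tau,i}}\sqrt{2c_j}\in[2,2k^2]$. By Proposition \ref{pro:ap}(1) this forces $\mathrm{supp}(\nu_k)\subset[2,2k^2]^k$, so $\Lambda_i\in[2,2k^2]$ almost surely for every $i$, which yields the pointwise sandwich
\[
\frac{E_i}{2k^2}\;\leq\; G_i\;=\;\frac{E_i}{\Lambda_i}\;\leq\;\frac{E_i}{2},\qquad \iuk,\ \text{a.s.}
\]

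Finally, for any bounded coordinate-wise non-decreasing $h:`R^k\to`R$, these coordinate inequalities give $h(E_i/(2k^2),\iuk)\leq h(G_i,\iuk)\leq h(E_i/2,\iuk)$ pointwise on the coupling space, and taking expectations delivers both inequalities of the proposition. I do not expect any real obstacle: the argument is a monotone coupling and its whole substance is the compact support bound for $\nu_k$ already established in Proposition \ref{pro:ap}. The only thing one must be careful about is matching the rate-parameter convention $\Exp[\lambda,x]=\lambda e^{-\lambda x}$ (so that parameters in $[2,2k^2]$ correspond to scalings in $[1/(2k^2),1/2]$), but this is exactly the convention used in \eref{eq:efg}.
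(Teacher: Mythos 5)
Your proof is correct and is essentially the argument the paper intends (the paper gives no explicit proof, only the remark ``According to this theorem and Proposition \ref{pro:ap}''): you combine the mixture representation of $\gamma_k$ with the compact support bound $\mathrm{supp}(\nu_k)\subset[2,2k^2]^k$, which follows from $\nu_k$ being an accumulation point in ${\cal M}([2,2k^2]^k)$, and then apply the standard monotone coupling. One small point worth flagging: Theorem~\ref{theo:multi-dim} as printed says $(G_i)\eqd(C_iE_i)$ with $C\sim\nu_k$, which if taken literally with $C_i\in[2,2k^2]$ would yield the reversed bounds $2E_i\le G_i\le 2k^2E_i$; the representation consistent with the density formula \eref{eq:par-law2} and with \eref{eq:efg} is $G_i\eqd E_i/\Lambda_i$ with $\Lambda\sim\nu_k$, which is what you correctly used and what the proposition's stated inequalities require.
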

We can add here that the bound $2k^2$ is not tight (even in the case $k=2$ as one can see on Figure \ref{fig:IFS}).

\subsection{Iteration of reflected BM ad libitum}
\label{sec:IRMal}

In this section $X=|B|$ is the reflected BM (RBM), and $I^{(n)}=X_n\circ \dots \circ X_1$ the $n$th iterated RBM.
\begin{pro}\label{pro:rflbm} Let $t_0=0,t_1,\cdots,t_k$ be some non negative distinct real numbers. The sequence
\[\l(I^{(n)}(t_i), \izk\r) \dd (0,I_1,\dots,I_k)\]
where $(I_i, \iuk)$ is invariant by permutation and independent from the $t_i$'s and takes its value in $(0,+\infty)^k$. Moreover we have $I_1\sim \Expo[2]$. \par
Hence, the gaps sequence MC  $(\gaps{I^{(n)}(t_i), \izk},n\geq 1)$ converges and its limit $(G_i, \iuk):=\gaps{0,I_1,\dots,I_k}$ determine $(0,I_1,\dots,I_k)$:  for a uniform permutation $\tau\in {\cal S}\cro{1,k}$ independent from $(G_i, \iuk)$,
\[I[k]\eqd \l(\ov{G}_{\tau(i)}, \iuk\r).\]
\end{pro}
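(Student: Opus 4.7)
The plan is to parallel the proof of Proposition \ref{pro:C-K} for iterated BM, adapting to the fact that reflected BM has dependent increments. The argument has three ingredients: (i) a reduction of the one-dimensional marginal to iterated BM, (ii) a gap Markov chain which still makes sense, because the joint law of $(|B|(\wh{t}_i), \izk)$ depends only on $\gaps{\wh{t}[0:k]}$, and (iii) a coupling-style argument akin to Section \ref{eq:AIL} to decouple the labelling permutation from the gaps.

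First, I would settle the marginal. Write $J^{(n)} = B_n \circ \cdots \circ B_1$ for the $n$th iterated two-sided BM of Proposition \ref{pro:C-K}. I claim that for any $t > 0$, $I^{(n)}(t) \eqd |J^{(n)}(t)|$. This holds by induction on $n$: given $I^{(n-1)}(t) = s \geq 0$, the law of $|B_n(s)|$ depends only on $|s| = s$; given $J^{(n-1)}(t) = u$, the law of $|B_n(u)|$ depends only on $|u|$; hence the inductive hypothesis $I^{(n-1)}(t) \eqd |J^{(n-1)}(t)|$ propagates through one more composition. Since $J^{(n)}(t)$ converges in distribution to a variable with density $e^{-2|x|}$ by Proposition \ref{pro:C-K}, the continuous mapping theorem gives $I^{(n)}(t) \dd I_1 \sim \Expo[2]$.

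Next, for the multi-dimensional convergence, although $|B|$ does not have independent increments, the vector $(|B|(\wh{t}_i), \izk)$ with $0 = \wh{t}_0 < \wh{t}_1 < \cdots < \wh{t}_k$ equals the vector of absolute values of the Gaussian random walk $S_i = W_1 + \cdots + W_i$ where $W_j \sim N(0, \Delta \wh{t}_j)$ are independent. Its joint law depends only on $\gaps{\wh{t}[0:k]}$, so $\G^{(n)}[k] := \gaps{I^{(n)}(t_i), \izk}$ is a genuine time-homogeneous Markov chain on $(0, \infty)^k$. I would prove its ergodicity by a Doeblin-type minorisation on compact sets combined with an a priori tightness estimate coming from the $\Expo[2]$ scale control on $\ov{\G}^{(n)}_k$; this gives a unique invariant law $\gamma_k^{\mathrm{RBM}}$ and convergence $\G^{(n)}[k] \dd (G_i, \iuk)$ independently of the starting sequence $g[k]$. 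To lift from the gap MC to the full finite-dimensional limit, I use that $I^{(n)}(t_0) = 0$ always occupies position $0$ in the sorted sequence, so the labelling permutation $\tau_n$ only encodes the order of $(I^{(n)}(t_i), \iuk)$. A coupling in the spirit of Section \ref{eq:AIL} then shows that $\tau_n$ becomes asymptotically uniform on ${\cal S}\cro{1,k}$ and independent of $\G^{(n)}[k]$. This yields the representation $I[k] \eqd (\ov{G}_{\tau(i)}, \iuk)$ with $\tau$ uniform and independent of $G$, and implies exchangeability of $(I_1, \dots, I_k)$, independence of the limit from the $t_i$'s, and positivity $I_i > 0$ a.s. (the $\Expo[2]$ marginal has no atom at $0$).

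The main obstacle is the ergodicity of $\G^{(n)}[k]$. The algebraic miracle of Section \ref{sec:BMal} --- stability of $\MEX_k$ under the BM gap operator (Proposition \ref{eq:lin}) --- breaks down here, because absolute values of Gaussian partial sums do not close under mixtures of independent exponentials. Thus one cannot reduce the problem to an iterated function system on parameter space as in the BM case; convergence must be established by purely probabilistic means (tightness plus uniqueness of the invariant law via Doeblin coupling), which is the most technical step of the argument.
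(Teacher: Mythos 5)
Your overall plan --- establish ergodicity and then recover the representation $I[k]\eqd(\ov{G}_{\tau(i)},\iuk)$ with a uniform labelling permutation --- is essentially the paper's route. The paper's proof consists of adapting the Foster--Lyapunov/Harris-recurrence argument from Theorem~\ref{theo:ISP} directly to the Markov chain $(I^{(n)}(t_i),\izk)$: the drift estimates for the potential $V=U+G$ of Section~\ref{sec:PT} are re-run with the reflected kernel, ergodicity of the full chain follows, and the gap-chain convergence, the exchangeability (via the symmetrisation observation that closes Section~\ref{sec:PT}), and the uniformity of the labelling permutation are then corollaries. Your variant proves the gap chain's ergodicity first and re-attaches the permutation through a coupling in the spirit of Section~\ref{eq:AIL}; this is a legitimate reversal of direction, marginally less economical because Harris recurrence of the full chain already delivers uniqueness of the invariant law and its exchangeability without a separate coupling step. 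Your one-dimensional reduction $I^{(n)}(t)\eqd|J^{(n)}(t)|$ via the iterated two-sided BM is a correct and somewhat cleaner way to obtain the $\Expo[2]$ marginal than the paper's appeal to the $k=1$ fixed point; note, though, that it does not extend to joint laws in $k\geq 2$, so it cannot be used beyond the marginal.

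However, your closing claim --- that ``the algebraic miracle of Section~\ref{sec:BMal}, the stability of $\MEX_k$ under the gap operator, breaks down'' for the reflected BM --- is factually wrong, and worth correcting because it suggests a structural difference that does not exist. The paper shows precisely the opposite in Section~\ref{sec:IRMal}: by Andr\'e's reflection principle the RBM transition kernel is $M_g(x,y)=\bigl(\Phi_g(y-x)+\Phi_g(y+x)\bigr)1_{y\geq 0}$, a \emph{two-term} sum of shifted Gaussian densities, and each term integrates against $\Exp[\lambda,g]\,dg$ to produce an exponential in the output variable exactly as in \eref{eq:sim-trans}. Expanding the $k$-fold product of two-term sums produces a sum indexed by a permutation $\tau$ and a subset $D\subset\cro{1,k}$, and the separation-of-variables identities \eref{eq:iden1} still apply factor by factor. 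This is Lemma~\ref{lem:transfo2}, the exact analogue of Lemma~\ref{lem:transfo}, and the parameter Markov chain together with the compact absorbing set $[2,18k^2]^k$ (in place of $[2,2k^2]^k$) carry over. This algebraic closure is complementary to the Harris-recurrence argument: the latter gives convergence, the former describes the limiting gaps law via $\nu_k$ and yields the analogues of Theorem~\ref{theo:multi-dim} and Proposition~\ref{eq:mso}. The obstruction you anticipate arises only in the genuinely non-Gaussian stable case of Section~\ref{sec:ISPal}, where \eref{eq:iden1} has no analogue and the variables cannot be separated.
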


The proof of this proposition can be adapted from the proof of Theorem \ref{theo:ISP}. All these results rely on the ergodicity of the Markov chain $\l(I^{(n)}(t_i), \izk\r)$. The estimates needed to deal with the reflected Brownian case
can be simply adapted from the simple Brownian case.\par

We now describe the limiting gaps sequence using again the MC at the parameters level.

First, the Markov kernel of the iterated BM has a density. Set, for any $g,x,y\geq 0$, $M_{g}(x,y)=`P\l(B_{t+g}\in dy ~| B_t \in dx \r)$. We have, by André's reflection principle,
\ben\label{eq:MKBR}
M_g(x,y) = \l(\Phi_g(y-x)+ \Phi_g(y+x)\r)1_{y\geq 0}
\een
where $\Phi_g$ is the density of $B_g$.

The gap MC kernel can be described too adapting consideration of Section \ref{sec:BP} (in words, 0 stay at the left). Starting with some gaps sequence $\gaps{t_0=0,t_1,\dots,t_k}=g[k]$, we will have $\gaps{X(t_0)=0,X(t_1),\dots,X(t_k)}=x[k]$, if, for the same notation as in \eref{eq:S}, for $\iuk$, $X(\widehat{t}_i)= \ov{x}_{\tau(j)}$ for some permutation $\tau \in {\cal S}\cro{1,k}$ (instead of $\cro{0,k}$ for the iterated BM).
We then have in this case a solid link between the Markov kernel of the gaps sequence and of the initial chain, since $\wh{t}_j=\sum_{i=1}^j g_i$. We get in this case 
\ben \label{eq:Psi2}
\Psi_{g[k]}(x[k])&=&\sum_{\tau\in {\cal S}\cro{1,k}} \prod_{i=1}^k M_{g_i}\l(\ov{x}_{\tau(i-1)}, \ov{x}_{\tau(i)} \r) \,1_{x_i>0}.
\een
Therefore, modifying a bit \eref{eq:elt},  one can still see that $\MEX$ is stable by the MC with kernel $\Psi_k$. 
One observes using \eref{eq:sim-trans}, \eref{eq:double}, 
\[
\int_{`R^+{}^k} \prod_{i=1}^k \Exp[\lambda_i,g_i] \Psi_{g[k]}(x[k]) dg_1...dg_k ~~~~~~~~~~~~~~~~~~~~~~~~~~~~~~~~~~~~~~~~~~~~~~~~~~~~~~\]
\[~~~~~~~~~~~~~~~~~~~~~  =\frac{1}{2^k}\sum_{\tau\in {\cal S}\cro{1,k}} \prod_{i=1}^k \Exp\l[{\sqrt{2\lambda_i}}, |\ov{x}_{\tau(i)}-\ov{x}_{\tau(i-1)}|\r]+\Exp\l[{\sqrt{2\lambda_i}}, |\ov{x}_{\tau(i)}+\ov{x}_{\tau(i-1)}|\r]. \]
After expanding this product, one can again put together the elements ``containing a given'' $x_i$. Using the same considerations as those below Remark \ref{rem:1}, this is also 
\[=\frac{1}{2^k}\sum_{\tau\in {\cal S}\cro{1,k}}\sum_{D\subset\cro{1,k}}
 \prod_{i \in D} \Exp\l[{\sqrt{2\lambda_i}}, |\ov{x}_{\tau(i)}-\ov{x}_{\tau(i-1)}|\r]\prod_{i \in \complement D}\Exp\l[{\sqrt{2\lambda_i}}, |\ov{x}_{\tau(i)}+\ov{x}_{\tau(i-1)}|\r]. \]
This formula is the analogous in the case of RBM to that on the BM, \eref{eq:elt}. 
Let $E_{\tau,i}$ as defined in Section \ref{sec:BP}, and 
\be
E'_{\tau,i}&=& \{j: x_i\in | \ov{x}_{\tau(j)}|\} = \{j: \tau(j)\geq i\}\\
E''_{\tau,i}&=& \{j: x_i\in | \ov{x}_{\tau(j-1)}|\} = \{j: \tau(j-1)\geq i\}.
\ee
Set
\[F_{\tau,D,i}(c[k])=\sum_{j \in D, j \in E_{\tau,i}} \sqrt{2c_j} +\sum_{j \in \complement D,  j \in E'_{\tau,i}} \sqrt{2c_j}+  \sum_{j \in \complement D,  j \in E''_{\tau,i}} \sqrt{2c_j}\]
and 
\[w_{\tau,D}(c[k])=\frac{1}{2^k} \prod_{i=1}^k \frac{\sqrt{2c_i}}{F_{\tau,D,i}(c[k])}.\]
Again let 
\[F_{\tau,D}=(F_{\tau,D,i}, 1\leq i \leq k).\]
Similarly to Lemma \ref{lem:transfo} we have
\begin{lem}\label{lem:transfo2}If $f$ is the function $f(x[k])=\prod_{i=1}^k \Exp[\lambda_i,x_i]$  for some $\lambda[k]\in(0,+\infty)^k$, then 
  \begin{displaymath}
    \Op_k(f)(x[k]) = \sum_{D\subset\cro{1,k}} \sum_{\tau\in {\cal S}\cro{0,k}} w_{\tau,D}(\lambda[k]) \prod_{i=1}^k \Exp\l[ F_{\tau,D,i}(\lambda),x_i\r].  \end{displaymath}
\end{lem}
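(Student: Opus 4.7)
The plan is to carry out explicitly the computation $\Op_k(f)(x[k])=\int f(g[k])\,\Psi_{g[k]}(x[k])\,dg_1\cdots dg_k$ when $f$ is a product of exponentials, using the decomposition of $M_g$ given in \eref{eq:MKBR} and the separation identities \eref{eq:iden1}, mimicking the proof of Lemma~\ref{lem:transfo} but keeping track of the extra binary choice introduced by reflection.

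First, I would insert the formula \eref{eq:Psi2} into the definition \eref{eq:Opk} of $\Op_k$, then substitute $M_{g_i}(\ov{x}_{\tau(i-1)},\ov{x}_{\tau(i)})=\Phi_{g_i}(\ov{x}_{\tau(i)}-\ov{x}_{\tau(i-1)})+\Phi_{g_i}(\ov{x}_{\tau(i)}+\ov{x}_{\tau(i-1)})$. Expanding the resulting product $\prod_{i=1}^k(\Phi_{g_i}(\cdots-\cdots)+\Phi_{g_i}(\cdots+\cdots))$ produces $2^k$ monomials, each indexed by the subset $D\subset\cro{1,k}$ recording for which indices $i$ we keep the ``$-$'' (difference) term. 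By Fubini, the integration in the $g_i$'s factorises, and applying \eref{eq:sim-trans}\textendash\eref{eq:double} to each $g_i$-integral yields, up to a factor $1/2$ per coordinate,
\[
\int_0^\infty \Exp[\lambda_i,g_i]\,\Phi_{g_i}(z)\,dg_i=\tfrac{1}{2}\Exp\l[\sqrt{2\lambda_i},|z|\r],
\]
where $z$ is either $\ov{x}_{\tau(i)}-\ov{x}_{\tau(i-1)}$ or $\ov{x}_{\tau(i)}+\ov{x}_{\tau(i-1)}$ according to the membership of $i$ in $D$. This gives the intermediate formula displayed above Lemma~\ref{lem:transfo2}.

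Next, I would separate the variables $x_1,\dots,x_k$ by rewriting each factor $\Exp[\sqrt{2\lambda_i},|\cdot|]$ as a product over the $x_j$'s that actually appear inside the absolute value. Concretely, $|\ov{x}_{\tau(i)}-\ov{x}_{\tau(i-1)}|$ involves exactly the $x_j$'s with $j\in E_{\tau,i}$, while $|\ov{x}_{\tau(i)}+\ov{x}_{\tau(i-1)}|=\ov{x}_{\tau(i)}+\ov{x}_{\tau(i-1)}$ involves the $x_j$'s with $j\in E'_{\tau,i}\cup E''_{\tau,i}$ (counted with multiplicity). Iteratively applying the first identity of \eref{eq:iden1}, each factor $\Exp[\sqrt{2\lambda_i},\sum_{j}x_j]$ splits as a product $\prod_j\Exp[\sqrt{2\lambda_i},x_j]$ up to a normalising constant. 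Then, collecting for each fixed $j$ all factors of the form $\Exp[\cdot,x_j]$ and applying the second identity of \eref{eq:iden1} iteratively, the $\lambda$-parameters add up to exactly $F_{\tau,D,j}(\lambda[k])$, and the accumulated normalising constants combine to $w_{\tau,D}(\lambda[k])$ thanks to the telescoping form $\sqrt{2c_i}\cdot\sqrt{2c_{i'}}/(\sqrt{2c_i}+\sqrt{2c_{i'}})$. This yields the announced representation.

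The main obstacle is purely bookkeeping: one must verify that, after the regrouping, the coefficient of $\prod_i\Exp[F_{\tau,D,i}(\lambda),x_i]$ is exactly $w_{\tau,D}(\lambda[k])=2^{-k}\prod_{i=1}^k\sqrt{2\lambda_i}/F_{\tau,D,i}(\lambda[k])$. This is done exactly as in Lemma~\ref{lem:transfo} by induction on $k$ (or by recording the count of each $\sqrt{2\lambda_j}$ that appears as a numerator factor when splitting, and the $F_{\tau,D,i}$ that appears as a denominator when merging); the additional sum over $D$ contributes independently in each coordinate and so does not interfere with the recursion. Once the coefficient is identified, summing over $\tau\in{\cal S}\cro{1,k}$ and $D\subset\cro{1,k}$ gives the claimed formula, which concludes the proof.
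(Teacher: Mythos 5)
Your proposal is correct and follows essentially the same route as the paper, which derives the formula in the text just before stating the lemma: insert \eqref{eq:Psi2} into \eqref{eq:Opk}, split $M_{g_i}$ into its two reflection terms, integrate each $g_i$ via \eqref{eq:sim-trans}, expand the product into $2^k$ terms indexed by $D\subset\cro{1,k}$, then separate and regroup the variables using \eqref{eq:iden1} to identify $F_{\tau,D,i}$ and $w_{\tau,D}$. One small slip: you write that the factor $|\ov{x}_{\tau(i)}-\ov{x}_{\tau(i-1)}|$ involves the $x_j$'s with $j\in E_{\tau,i}$, but with the paper's convention $E_{\tau,i}$ is the set of factor indices $j$ in which $x_i$ appears, so the set you want is $\{j:i\in E_{\tau,j}\}$; you implicitly use the correct convention when you collect all factors $\Exp[\cdot,x_j]$ to form $F_{\tau,D,j}$, so the argument goes through, and you are also right that the permutation runs over ${\cal S}\cro{1,k}$ (the ${\cal S}\cro{0,k}$ in the lemma's statement is a typographical carryover from the non-reflected case).
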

As in the iterated Brownian motion case, to this operator one can associate a Markov chain $Z^{(n)}$ with kernel $Q$ (defined as \eref{eq:parameter-kernel}) at the level of the parameters (see also \eref{eq:MC-parameter}). Again, the Markov chain $Z^{(n)}$ stays eventually confined in a compact region of $`R^+{}^k$ (the compact $[2,18k^2]^k$ is conserved by each of the $F_{\tau,D}$). By the same considerations as that of Section \ref{sec:BP}, Proposition \ref{pro:ap} holds for the present case (with $[2,18k^2]^k$ instead of $[2,2k^2]^k$).
The analogous of Theorem \ref{theo:multi-dim} holds too, for $\nu_k$ the fixed point of $Q$, and Proposition \ref{eq:mso} too, with $18k^2$ instead of $2k^2$ (again $18k^2$ is not tight).

\subsection{$n$th iteration of the BM ad libitum}
\label{sec:IRMn}
In the literature, the standard iterated Brownian motion corresponds to our process $I^{(2)}$. 
It has been deeply studied. It permits to construct solutions to partial differential equations~\cite{Fun79}. Burdzy studied some of its sample paths properties~\cite{Bur93}. Lot of results have been obtained around its probabilistic and analytic properties, see~\cite{Bur93,Ber96,ES99,BK95,Xiao98,KL99} and the references therein. The $n$th IBM permits to construct solutions of differential equations~\cite{OB09}, but they are less studied, only~\cite{Ber96} mentioned that his result can be extended to $n$th IBM. As far as we are aware of, there are no result concerning some description of the finite dimensional distributions of this process. In the sequel, we show that our gaps point of view allows one to give (a non trivial) description of them., but sufficiently simple to make some exact computations for small values of $n$ and $k$.

Let $n\geq 1$ be fixed, as well as $(t_0=0,t_1,\dots,t_k)$ some distinct numbers. The aim of this part is to describe the distribution of $(I^{(n)}(t_i), \izk)$, where $I^{(n)}=B_n \circ\cdots \circ B_1$, where the $B_i$ are i.i.d. two sided BM.

We built our reflection on the considerations presented in Section \ref{sec:IPE}. Start with formula \eref{eq:tran} which expresses the encoding Markov chain kernel. Here, of course, $\Phi_g$ is the Gaussian density.
Again, by \eref{eq:sim-trans}
\ben\label{eq:fondqd}
\int \prod_{i=1}^k \Exp[\lambda_i,g_i]K_{g[k],\tau}((g'_1,\dots,g'_k),\tau'\circ\tau) dg_1...dg_k
= \frac{1}{2^k}\prod_{i=1}^k  \Exp\l[\sqrt{2\lambda_i},|\Delta \ov{g'}_{\tau'(i)}|\r].
\een
Thanks to \eref{eq:iden1}, we can again rewrite the right hand side to code  the evolution on the parameter space. 
Setting $m_i=\min\{\tau'_{i-1},\tau'_{i}\}$ and $M_i=\max\{\tau'_{i-1},\tau'_{i}\}$ we get
$|\Delta \ov{g'}_{\tau'_i}|=  g'_{1+m_i}+\cdots + g'_{M_i}$. 
Once again, collect the different contribution: set $E_j(\tau,\tau')=\{~i: j \in \cro{m_i +1,M_i}\}$ the set of indices $i$ such that $g_j'$ contributes to $|\Delta \ov{g'}_{\tau'(i)}|$. The RHS of \eref{eq:fondqd} rewrites
\[w_{\tau,\tau'}(\lambda[k])  \prod_{j=1}^k\Exp\l[ F_{\tau,\tau',j}(\lambda),  g'_j\r].\]
where 
\[w_{\tau,\tau'}(\lambda[k])= \frac{1}{2^k}\prod_{j=1}^k  \frac{\prod_{i \in E_j(\tau,\tau')} \sqrt{2\lambda_i}}{\sum_{i\in E_j(\tau,\tau')}\sqrt{2\lambda_i}} \textrm{ and } F_{\tau,\tau',j}(\lambda[k])=\sum_{i\in E_j(\tau,\tau')}\sqrt{2\lambda_i}.\]
Consider $\MEX'_k$ the set of measures that are mixtures of distribution on $`R^k\times {\cal S}\cro{0,k}$  of the type $\l(\prod_{i=1}^k \Exp[\lambda_i]\r)\times \delta_{\tau}$ where $\delta_\tau$ is a Dirac on a permutation $\tau$.
The previous considerations show that the kernel $K$ operates linearly on $\MEX'_k$. It sends  $\l(\prod_{i=1}^k \Exp[\lambda_i]\r)\times \delta_{\tau}$ on 
\[\sum_{\tau'} w_{\tau,\tau'}(\lambda[k]) \l(\prod_{i=1}^k\Exp\l[F_{\tau,\tau',j}(\lambda[k]),  g'_j\r]\r)\times \delta_{\tau'\circ \tau}.\]
This can again be written at the parameter level under the form of a time homogeneous MC on $`R^k \times {\cal S}\cro{0,k}$, which, starting at time 0 at position $(\lambda[k],\tau)$, takes at time 1, the value $((F_{\tau,\tau',j}(\lambda[k]), 1\leq j \leq k), \tau'\circ\tau)$ with probability $w_{\tau,\tau'}(\lambda[k])$ (for any $\tau'\in {\cal S}\cro{0,k})$.  Denote again by $Q$ the corresponding kernel.  

This explicit description allow computations for small values of $k$ and of $n$.
Recall at the beginning of Section \ref{sec:IPE} the decoding map $C^{-1}$. 
Finally denoting by $ `E_{\lambda[k],\tau}$ the expectation when the initial encoding distribution is $\l(\prod_{i=1}^n \Exp(\lambda_i)\r) \times \delta_\tau$, we find
\ben\label{eq:qfdqfd}
`E_{\lambda[k],\tau}(f(I^{(n)}[k]))&= &\sum_{\tau'} \int_{R^+{}^k}{}Q^{n}_{(\lambda[k],\tau)}([d\lambda'_1,\dots,d\lambda'_k) ],\tau'\circ \tau)\\
&\times &\int_{`R^k} f(C^{-1}(y[k],\tau'\circ \tau))\l( \prod_{i=1}^n \Exp[\lambda'_i,y_i]\r) dy_1...dy_k.
\een
The LHS appears to the Laplace transform of $f(I^{(n)}(t_1),\dots,I^{(n)}(t_k))$ with respect to the initial gaps sequence, and then it characterises the distribution. This is not a simple description, but we think that it is the simplest representation of the finite dimensional distribution of the iterated BM one can find.

\section{Stable processes iterated ad libitum}
\label{sec:ISPal}
\subsection{Main results}
In this section, we consider independent two sided-stable processes $X_1,X_2, \dots,$ with parameters $(\alpha,\sigma,r)$ as defined in Section \ref{sec:P}, and there successive iterations $I^{(n)}=X_n \circ \cdots \circ X_1$. In this section, $\Phi_g$ is no more the Gaussian density but the density of $X_1(g)$. \par

 Two sided stable processes possess independent and stationary increments, as well as a scaling property which makes their iterations very similar to that of BM (general Lévy processes seem more difficult to handle because of this lacking scaling property).
Here are the convergence results we get for iterated stable processes $I^{(n)}$, as described in Section \ref{sec:P}.
\begin{theo} \label{theo:ISP}
Assume $\sigma\in(0,+\infty)$. Take $k,n\geq 1$ and some non zero $t_1,\dots,t_k$. Set, 
\[I^{(n)}[k]:=(I^{(n)}(t_1),\cdots,I^{(n)}(t_k)).\]
\begin{enumerate}
\compact
\item When $\alpha \leq 1$ and any $r$, for any $t>0$,  $I^{(n)}(t)$ does not converge in distribution in $\mathbb{R}$.
\item When $1 < \alpha \leq 2$ and $|r|>1$ then $I^{(n)}(t_1)$ does not converge in distribution in $\mathbb{R}$
\item When $1 < \alpha \leq 2$ and $|r|<1$  the MC $I^{(n)}[k]$ converges in distribution. The limit distribution $\mu_k$ does not depend on the $t_i$'s and is then exchangeable. For  $I[k] \sim \mu_k$, the equality $I[k]\eqd (X(I_1),\cdots, X(I_k))$ holds.
 Moreover,  $(I_2-I_1,\dots, I_k-I_1)\sim \mu_{k-1}$\\
When $r=0$, under $\mu_1$, $I_1\eqd `e\prod_{i\geq 0} |X(1)^{(i)}|^{1/{\alpha^i}}$ where the $X(1)^{(i)}$'s are i.i.d. copies of $X(1)$ and $`e$ is an independent uniform random sign.
\end{enumerate}
\end{theo}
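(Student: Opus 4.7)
My plan is to treat the non-convergence statements (items 1 and 2) by simple scaling arguments applied to $L_n(t):=\log|I^{(n)}(t)|$, to obtain item 3 by an abstract ergodicity argument mirroring the Brownian case of Proposition \ref{pro:C-K}, and to derive the explicit product formula for $I_1$ from an auto-regressive recursion for $L_n$ in the symmetric case.

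For items 1 and 2, I condition on $I^{(n-1)}(t)$ and invoke the scaling identity \eref{eq:rec}. When $r=0$ this gives $|X(s)|\eqd |s|^{1/\alpha}|X(1)|$, leading to the recursion $L_n(t)\eqd (1/\alpha)L_{n-1}(t)+Z_n$ with $Z_n:=\log|X^{(n)}(1)|$ i.i.d.\ and integrable (a finite log-moment holds for every stable law of index $\alpha\in(0,2]$). For $\alpha<1$ the multiplier $1/\alpha>1$, so $L_n(t)$ is a geometrically expanding AR(1) and is not tight; for $\alpha=1$ the recursion becomes a random walk with symmetric nondegenerate increments (standard Cauchy satisfies $X\eqd 1/X$, hence $Z_n\eqd -Z_n$), which does not converge in distribution. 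For $r\neq 0$ with $\alpha\leq 1$ the same geometric blow-up persists, since in $X(s)=sr+|s|^{1/\alpha}(X(1)-r)$ the factor $|s|^{1/\alpha}$ dominates the drift $sr$ for large $|s|$. For item 2, with $1<\alpha\leq 2$ and $|r|>1$, the situation is reversed: $1/\alpha<1$ so the drift $sr$ now dominates the fluctuations. The one-step identity $\mathbb{E}[I^{(n+1)}(t)\,|\,I^{(n)}(t)=s]=sr$ gives $\mathbb{E}[I^{(n)}(t)]=r^n t$, and a comparison between the drift size $|r|^n|t|$ and the fluctuation size (of order $|r|^{n/\alpha}|t|^{1/\alpha}$) at step $n$ shows that $|I^{(n)}(t)|\to\infty$ in probability, precluding any limit in distribution.

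For item 3, take $1<\alpha\leq 2$ and $|r|<1$. I follow the framework of Section \ref{sec:FC}. First, tightness of $I^{(n)}[k]$ is established: the one-dimensional part comes from the contracting log-recursion $L_n\eqd (1/\alpha)L_{n-1}+\tilde Z_n$ with $1/\alpha<1$ and $\tilde Z_n$ integrable, which forces $L_n$ to converge in distribution, and the multi-dimensional part follows from a moment bound on the gap sequence using the kernel \eref{eq:Psi}. Next, the coupling/renewal argument of Section \ref{eq:AIL} applied to the encoding chain $C[I^{(n)}]=(\G^{(n)}[k],\tau_n)$ yields ergodicity of the chain, asymptotic decorrelation into a gap component and an independent uniform labelling permutation, and hence exchangeability of $\mu_k$. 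Independence of $\mu_k$ from the $t_i$'s is a consequence of the scaling property, which allows the initial gap sequence to be rescaled without altering the limit. The fixed-point identity $I[k]\eqd(X(I_1),\dots,X(I_k))$ is obtained by letting $n\to\infty$ in $I^{(n+1)}[k]=X_{n+1}(I^{(n)}[k])$, using tightness and continuity of $X$ on compacts; the relation $(I_2-I_1,\dots,I_k-I_1)\sim\mu_{k-1}$ follows from the stationarity of the increments of $X$, since writing $X(I_i)-X(I_1)$ as an increment of $X$ over $[I_1,I_i]$ reduces the $(k-1)$-tuple to an iterated stable process evaluated at the $k-1$ distinct, nonzero points $I_2-I_1,\dots,I_k-I_1$, whose limit law is $\mu_{k-1}$ by the preceding part of the statement.

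Finally, for the explicit formula of $I_1$ when $r=0$, iterating $|I^{(n)}(1)|\eqd |I^{(n-1)}(1)|^{1/\alpha}|X^{(n)}(1)|$ yields $\log|I^{(n)}(1)|\eqd\sum_{i=1}^{n}\alpha^{-(n-i)}\log|X^{(i)}(1)|$, which converges almost surely as $n\to\infty$ (since $\alpha>1$ and the summands are i.i.d.\ integrable) to $\sum_{i\geq 0}\alpha^{-i}\log|X^{(i)}(1)|$; exponentiating and attaching an independent uniform sign $\varepsilon$ (coming from the symmetry of the two-sided stable process) gives the claimed product representation. The main obstacle is the tightness/ergodicity step of item 3: unlike the Brownian case, no algebraic simplification analogous to Proposition \ref{eq:lin} is available in the stable setting, so convergence must be obtained through abstract coupling and moment estimates rather than through an explicit contraction on a finite-dimensional parameter space.
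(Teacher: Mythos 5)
Your treatment of items 1 and 2 follows the same scaling philosophy as the paper, though with two weak spots. First, for $\alpha=1$ with $r\neq 0$ your claim that ``$|s|^{1/\alpha}$ dominates the drift $sr$'' is vacuous: both terms scale linearly in $s$. The paper instead uses the symmetry of $\widetilde{X}(1)$ to derive the exact product identity $I^{(n)}(t)\eqd t\prod_{i=1}^n(r+\widetilde{X}^{(i)}(1))$, whose logarithm is a random walk with i.i.d.\ increments and hence cannot converge; you only handle $r=0$. Second, in item 2 the conclusion ``$|I^{(n)}(t)|\to\infty$ in probability'' does not follow merely from comparing $\mathbb{E}[I^{(n)}(t)]=r^n t$ with a heuristic fluctuation size; the paper controls a recursive conditional probability $\mathbb{P}(|I^{(n)}(t)|\geq s^n\mid |I^{(n-1)}(t)|\geq s^{n-1})$ using the stable tail bound, which is what actually closes the argument.

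The main problem is in item 3, and it is a missing idea rather than a gap in rigor. You propose ``a moment bound on the gap sequence'' plus the coupling of Section \ref{eq:AIL} to obtain ergodicity, but this is circular: that coupling explicitly presupposes tightness of the encoding chain $C[I^{(n)}]$, it does not produce it. Tightness of the one-dimensional marginals via the contracting log-recursion is not enough, because the dangerous regime for the $k$-dimensional chain is not large values but \emph{collapse of pairwise distances}: when $I^{(n)}(t_i)$ and $I^{(n)}(t_j)$ are close, they tend to stay close under the next iteration. The paper handles this with the Harris-recurrence machinery of Meyn--Tweedie and a Lyapunov function of the specific form $V(x[k])=\max_i|x_i|+\sum_{i<j}|x_i-x_j|^{-1/\alpha}$ (with $x_0=0$), whose singular second term gives a strictly negative drift precisely when two coordinates approach each other. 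Without a potential that blows up as coordinates merge, you have no way to force the chain back into the petite set $S_M=\{M^{-1}\leq|x_i|\leq M,\ |x_i-x_j|\geq M^{-1}\}$, and the ergodicity claim is unsupported. The fixed-point identity $I[k]\eqd(X(I_1),\dots,X(I_k))$, the marginal relation with $\mu_{k-1}$, and the $r=0$ product formula for $I_1$ are all fine once convergence is in hand, and your derivations of those match the paper's.
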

\begin{rem}\label{rem:comp}
If $X$ and $X'$ are two stable processes with parameters $(\alpha,1,0)$ and $(\alpha,\sigma,0)$ for some $\alpha \in(1,2]$ and $\sigma>0$, then  $X'\eqd \sigma X$. 
The successive iteration of $(\alpha,1,0)$ and $(\alpha,\sigma,0)$ stable processes and limits (if any) can be compared by a simple coupling, but the property fails when $r\not= 0$. 
\end{rem}
\begin{rem} 
Notice that since $\mu_{k+1}$ is exchangeable, the rank of $I_1$ in $I[k+1]$ is uniform. Therefore the (random) number of indices $\#\{j: 2\leq j \leq k+1, I_j-I_1 >0 \}$ is uniform in $\cro{0,k}$. In other words, if $I[k]\sim\mu_k$ the rank of 0 in the list $(0,I_1,\dots,I_k)$ is uniform.
\end{rem}

\begin{lem} When $1 < \alpha \leq 2$ and $|r|<1$, the MC $(\G^{(n)}[k], n \geq 1)$ converges in distribution, and the limit distribution does not depend on the initial non-zero state.
\end{lem}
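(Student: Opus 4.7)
The plan is to derive this lemma as a direct corollary of Theorem \ref{theo:ISP}(3) via the continuous mapping theorem. Fix any initial state $g[k]\in(0,+\infty)^k$ of the gaps MC, and set $t_0=0$ together with $t_i=g_1+\cdots+g_i$ for $i\in\cro{1,k}$. Since each $g_i>0$, the reals $t_0,t_1,\dots,t_k$ are $k+1$ distinct numbers satisfying $\gaps{t[0:k]}=g[k]$.

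By Proposition \ref{pro:pro2} applied at each step to the fresh stable process $X_{n+1}$, the sequence $\l(\gaps{(I^{(n)}(t_0),\dots,I^{(n)}(t_k))}, n\geq 0\r)$ is a Markov chain with the kernel $\Psi$ of the gaps MC and with initial state $g[k]$, hence it is equidistributed with $\G^{(n)}[k]$ started at $g[k]$. Using $I^{(n)}(0)=0$ a.s., this chain coincides with $\gaps{(0,I^{(n)}(t_1),\dots,I^{(n)}(t_k))}$. Under the hypotheses $1<\alpha\leq 2$ and $|r|<1$, Theorem \ref{theo:ISP}(3) yields $I^{(n)}[k]\dd I[k]\sim\mu_k$ with $\mu_k$ independent of the $t_i$'s; prepending the deterministic coordinate $0$ preserves weak convergence, so
\[
(0,I^{(n)}(t_1),\dots,I^{(n)}(t_k))\dd (0,I_1,\dots,I_k).
\]

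The map $\gaps:\mathbb{R}^{k+1}\to[0,+\infty)^k$ is continuous everywhere, being the composition of sorting and successive differences, so the continuous mapping theorem gives $\G^{(n)}[k]\dd\gaps{(0,I_1,\dots,I_k)}$. The right-hand side depends only on $\mu_k$, hence is independent of the choice of $g[k]$, which establishes both assertions. No real obstacle arises: the entire content is transferred from Theorem \ref{theo:ISP}(3) by continuity, the hypotheses $\alpha>1$ and $|r|<1$ entering only through that theorem. The one optional refinement, not strictly needed for the statement, is to check that the limit is supported in the open orthant $(0,+\infty)^k$ so that the chain converges within its state space; this follows from the absolute continuity of the two-dimensional marginals of $\mu_k$, itself a consequence of the fixed-point identity $I[k]\eqd (X(I_1),\dots,X(I_k))$ together with the absolute continuity of stable transitions.
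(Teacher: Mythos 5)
Your proof is correct and follows essentially the same route as the paper: identify a time vector whose gaps sequence equals the initial state, invoke Theorem \ref{theo:ISP}(3) for the convergence of the finite-dimensional distributions, and push the limit through the continuous map $\gaps{\cdot}$. The only variation is that you take $t_0=0$ (so you apply the theorem at dimension $k$ and prepend the deterministic zero), whereas the paper chooses all $t_0,\dots,t_k$ nonzero and applies the theorem at dimension $k+1$; both are valid because the gaps transition kernel depends on the time vector only through its gaps sequence.
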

\begin{proof} Take some gaps sequence $g[k]\in \mathbb{R}^\star{}^k$. They are the gaps sequence of some non zeros and distinct times $t_0, t_1,\dots,t_k$. Start with $I^{(0)}[k+1]=(t_0,\dots,t_k)$.  Since $I^{(n)}[k+1]$  converges in distribution to a limit independent from the $t_i$ (Theorem \ref{theo:ISP}(3)), then the gaps sequence MC $\G^{(n)}[k]:=\gaps{I^{(n)}[k]}$  too.  
\end{proof}
Consider the case $k=1$ and  $X$ a symmetric stable process for some $\alpha\in(1,2]$ and $r=0$. Assume that the gap at time 0 is distributed as $G$, at time 1, the gap will be $|X(G)|$.
Then from the equality $G \eqd |X(G)|\eqd G^{1/\alpha} |X(1)|$ we infer
\ben\label{eq:dpq}
G\eqd \prod_{i\geq 0} |X(1)^{(i)}|^{\frac1{\alpha^i}},
\een
where the $X(1)^{(i)}$ are i.i.d. copies of $X(1)$ (the complete argument can be adapted from Section \ref{sec:PT}). When $r\neq 0$ there are not any such simple formula.

Let $\phi$ be the density of $G$ as defined in \eref{eq:dpq}. We have $\Op_1(\phi)=\phi$, and $\int_{g\geq 0} \phi(g) \Psi_g(x)dg=\phi(x)$ is an identification between the densities of $|X(G)|$ and of $|G|$. 
Using that $\phi[c,g]:=\frac{\phi(g/c)}c$ is the distribution of $cG$, we get that  $\int_{g\geq 0} \phi[c,g] \Psi_g(x)dg$ is the density of $|X(cG)|\eqd c^{1/\alpha}|X(G)|\eqd c^{1/\alpha} G$, which density is $\phi[c^{1/\alpha},x]$. All of this can be summed up in
\ben\label{eq:mono-stable}
\int_{g\geq 0} \phi[c,g] \Psi_g(x)dg =\phi[c^{1/\alpha},x].
\een 
Let $\nu[c]$ the distribution whose density is $\phi[c,.]$. The map $\Op_1$ sends $\nu[c]$ onto $\nu[c^{1/\alpha}]$ and is a linear application on the set of mixtures of distributions $\nu[c]$. This property which in the Brownian case allowed us to prove that $\Op_k$ was linear on the set of mixtures of product of exponential distributions can not be extended here. The reason is that, to separate the variables in \eref{eq:elt}, we used \eref{eq:iden1}. This important property holds only for exponential distributions, and it turns out that in the stable case, product measures of the form $\prod_{i=1}^k \phi[c_i,x_i]$ are not sent by $\Op_k$ on mixtures of measures of the same kind. We were not able to find a family of measures on which $\Op_k$ would operates simply but the discovery of such a family would be an important step for the identification of the distribution of $I[k]$.

\subsection{Occupation measure in the stable case}
\label{sec:om}
As stated in Proposition \ref{pro:C-K}, Curien and Konstantopoulos \cite{C-K} obtained some information about the occupation measure of the iterated Brownian motion ad libitum. 
In the stable case, when convergence holds, the family of limiting distributions $\mu_k$ are consistent, and since, they correspond to distribution of exchangeable vectors, by Kolmogorov extension theorem together with de Finetti representation theorem, there exists a random measure $\mu$, so that for any $k\geq 0$, $\mu_k$ is the distribution of $(U_1,\dots,U_k)$ i.i.d. random variables taken under $\mu$ (this is explained in the Brownian case in \cite{C-K}).

The main tool used in \cite{C-K} to characterise the regularity of the density of the occupation measure is a paper by Pitt \cite{Pit} only available in the Gaussian case. We are not able for the moment to get a similar result in the stable case, and then we renounce to go on our research in this direction. In view of Figure \ref{Fig:simu}, we may expect that for some small parameters $\alpha$ in $(1,2]$ (close to 1), the density of the local time should be not positive on the range of its support. 
\begin{figure}[htbp]
\centerline{\includegraphics[width=6cm,height=4cm]{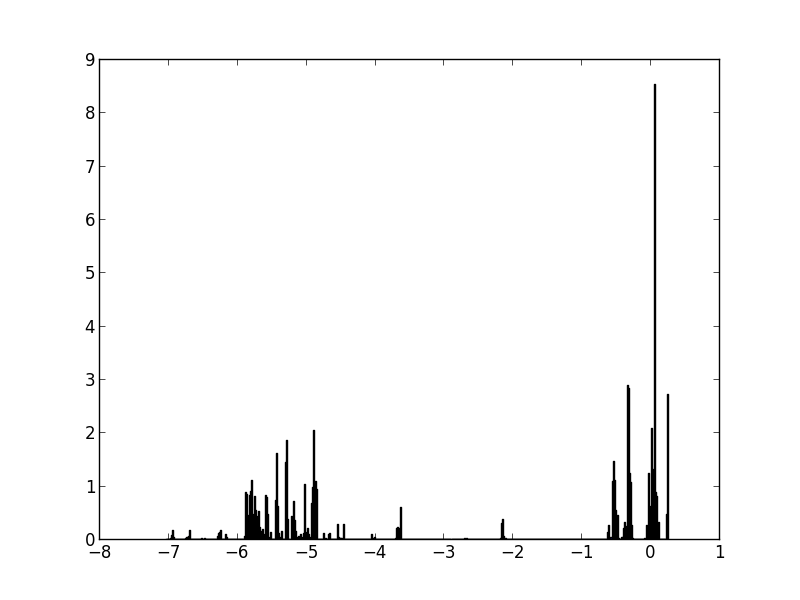}~~\includegraphics[width=6cm,height=4cm]{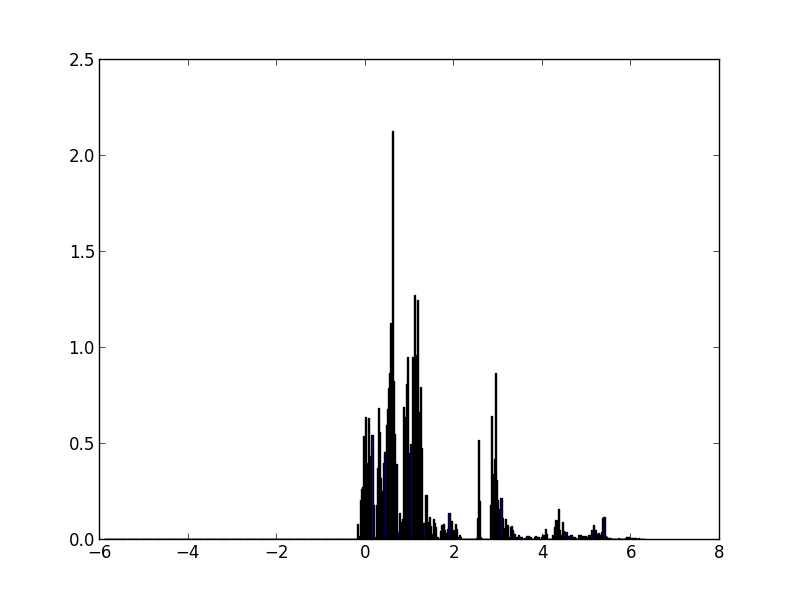}}
\centerline{\includegraphics[width=6cm,height=4cm]{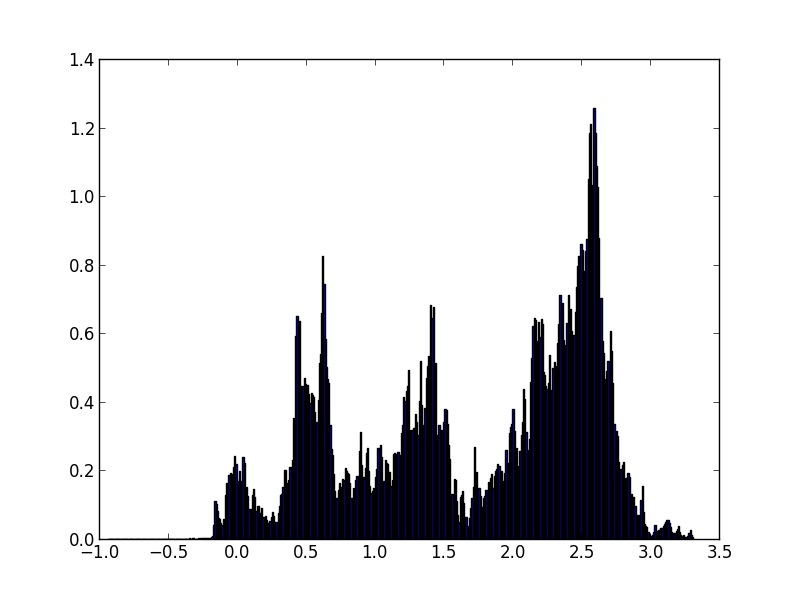}~~\includegraphics[width=6cm,height=4cm]{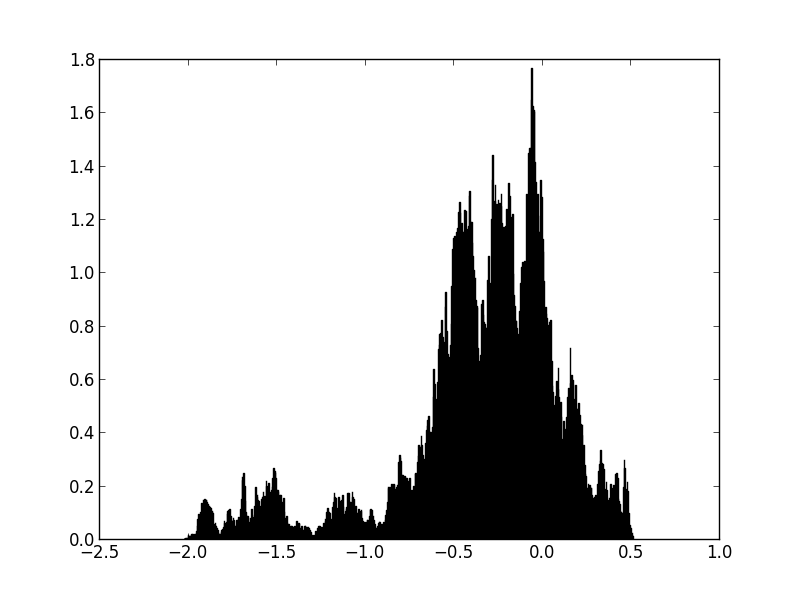}}
\caption{\label{Fig:simu} Simulation of the local time of the iterated centred stable processes ad libitum, in the case $\alpha=1.2, 1.5, 1.8$ and $2$. Each of them is made from an histogram made with a sample $(X^{(10)}(t_i),1\leq i \leq 10^6)$ starting from some fixed position.}
\end{figure}

In the next subsection, we discuss the finiteness of the support of the limiting occupation measure.
The proof follows the same structure as that of~\cite[Prop. 7]{C-K}. 
Let $P$ be any two-sided real process (in our case $P=X,I^{(n)} \text{ or } I$). The range of $P$ on  $[a,b]$ is defined by
\begin{equation}
R_P(a,b) = \sup_{a \leq t \leq b} P(t) - \inf_{a \leq t \leq b} P(t).
\end{equation} 
In the following, set  $D=R_X(0,1)$.

\begin{lem} \label{lem:rangeISP}
For any $|r|<1$ and $\alpha \in(1,2]$, for almost any $t \neq 0$, 
$R_{I^{(n)}}(0,t)$ converges in law to a r.v. $\Delta$ which does not depend on $t$.
Moreover, when $r = 0$, 
\begin{equation} \label{eq:rangeISPinf}
\Delta \eqd \prod_{i=0}^{\infty} D_i^{\alpha^{-i}}
\end{equation}
where the $D_i$'s are i.i.d. copies of $D$.
\end{lem}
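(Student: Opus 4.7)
The plan is to exploit the iterative structure $I^{(n)} = X_n \circ I^{(n-1)}$ to derive a recursion in law for $R_n := R_{I^{(n)}}(0,t)$, and then pass to an infinite product by letting $n \to \infty$. Fix $t>0$ (the case $t<0$ is symmetric) and set $m_{n-1} = \inf_{[0,t]} I^{(n-1)}$, $M_{n-1} = \sup_{[0,t]} I^{(n-1)}$, so that $M_{n-1} - m_{n-1} = R_{n-1}$. In the Brownian case, continuity of $I^{(n-1)}$ gives $I^{(n-1)}([0,t]) = [m_{n-1},M_{n-1}]$ and hence $R_n = R_{X_n}(m_{n-1},M_{n-1})$. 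Conditioning on $I^{(n-1)}$ and using stationarity of the increments of $X_n$ yields $R_{X_n}(m_{n-1},M_{n-1}) \eqd R_{X_n}(0,R_{n-1})$; when $r=0$, the scaling property of symmetric $\alpha$-stable processes (the symmetric case of \eqref{eq:rec}) gives $R_{X_n}(0,L) \eqd L^{1/\alpha} D_n$, with $D_n$ an independent copy of $D$. Thus the recursion $R_n \eqd R_{n-1}^{1/\alpha}\, D_n$ holds.

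Iterating from $R_0 = t$ I obtain
$$R_n \;\eqd\; t^{\alpha^{-n}}\prod_{j=0}^{n-1} D_{n-j}^{\,\alpha^{-j}},$$
and after relabelling the i.i.d.\ factors this is the law of $t^{\alpha^{-n}}\prod_{j=0}^{n-1} D_j^{\alpha^{-j}}$. Since $\alpha>1$, the deterministic prefactor $t^{\alpha^{-n}}$ tends to $1$ (this is exactly where $t \neq 0$ is used, and it also explains the $t$-independence of the limit). The infinite product $\Delta := \prod_{j=0}^\infty D_j^{\alpha^{-j}}$ converges almost surely: taking logarithms, one gets a sum of independent terms $\alpha^{-j}\log D_j$ whose absolute expectations are summable since $\sum \alpha^{-j} < \infty$ and $\mathbb{E}|\log D| < \infty$. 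The latter bound combines the standard moment estimates for the range of a stable process ($D$ has finite moments of every order below $\alpha$, handling $\log^+ D$) with small-ball estimates giving enough decay of $\mathbb{P}(D < \epsilon)$ to control $\log^- D$. Hence $R_n \dd \Delta$, which proves both the convergence and the identity \eqref{eq:rangeISPinf}.

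For $\alpha\in(1,2)$, the path $I^{(n-1)}$ has jumps and $I^{(n-1)}([0,t])$ is generally a proper subset of $[m_{n-1},M_{n-1}]$. To preserve the identity $R_n = R_{X_n}(m_{n-1},M_{n-1})$, I would argue that the range of a symmetric $\alpha$-stable process with $\alpha\in(1,2)$ is a.s.\ dense in its interval hull, whence, exploiting the conditional independence of $X_n$ from $I^{(n-1)}$ together with the fact that a càdlàg process attains the same sup and inf on a dense subset as on the whole interval after incorporating left limits, the scaling argument goes through unchanged. For $|r|<1$ with $r\neq 0$ the scaling \eqref{eq:rec} carries an additive drift, so the recursion no longer telescopes into a clean product; convergence in law of $R_n$ can still be obtained from tightness of the recursion together with the existence of a unique stationary limit (in the spirit of Theorem~\ref{theo:ISP}(3)), but no explicit formula is available.

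The step I expect to be the main obstacle is precisely the density-of-range argument in the jump regime $\alpha<2$, which is needed to justify $R_n = R_{X_n}(m_{n-1},M_{n-1})$ almost surely; everything else is a direct translation of the Brownian argument of \cite{C-K}.
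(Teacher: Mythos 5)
Your argument matches the paper's in the $r=0$ case: the same recursion $R_n \eqd R_{n-1}^{1/\alpha} D_n$, the same telescoping into a finite product, and the same observation that $t^{\alpha^{-n}}\to 1$ since $\alpha>1$; your product formula $t^{\alpha^{-n}}\prod_{j=0}^{n-1}D_j^{\alpha^{-j}}$ is the correct one (the paper's intermediate display carries an index slip, though its final statement agrees with yours). You also correctly observe that the step $R_n = R_{X_n}(m_{n-1},M_{n-1})$ is not automatic when $\alpha<2$: since $I^{(n-1)}$ has jumps, its image over $[0,t]$ is a proper subset of the interval $[m_{n-1},M_{n-1}]$, and the paper asserts this equality without comment. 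Your proposed fix via density of the range is not quite sound, though: for c\`adl\`ag $X$ and a dense $S\subset[a,b]$ one can have $\sup_S X < \sup_{[a,b]} X$ if the supremum is attained only as a left limit, and density of the range in its interval hull for a symmetric $\alpha$-stable process would itself need justification. So you have located a genuine subtlety, shared with the paper, but not closed it. Similarly, you are right that the paper's Borel--Cantelli argument (via Doob's $L^\beta$ bound) controls only the upper tail of $\log D$; a small-ball estimate on $\prob{D<\epsilon}$ is also needed for $\sum \alpha^{-j}(\log D_j)^-$ to converge, so your remark is an improvement rather than a deviation.

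Where your proposal falls genuinely short of the paper is the case $0<|r|<1$. You gesture at tightness and a unique stationary limit ``in the spirit of'' the convergence theorem but give no mechanism. The paper proceeds concretely: from the distributional bound $R_{n+1}\leq R_n^{1/\alpha} D + r R_n$ it derives the Foster--Lyapunov drift inequality $\esp{R_{n+1}\mid R_n} - R_n \leq -1 + b\,1_{[0,M]}(R_n)$ for a suitable threshold $M$, and then invokes Theorem~13.0.1(iv) of Meyn and Tweedie to conclude ergodicity of the real-valued chain $(R_{I^{(n)}}(0,t))_{n\geq0}$. This drift computation is what actually delivers convergence in law for $r\neq 0$, and it is the step missing from your write-up.
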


\begin{proof}
Let $A_n(t) = \inf\{ I^{(n)}(u),0 \leq u \leq t\}$ and $B_n(t) = \sup\{ I^{(n)}(u),0 \leq u \leq t\}$.
 When $r=0$, 
\begin{eqnarray*} 
R_{I^{(n+1)}}(0,t)
& = & \sup_{A_n(t) \leq v \leq B_n(t)} X(v) - \inf_{A_n(t) \leq v \leq B_n(t)} X(v) \\
& = & R_X\left( A_n(t),B_n(t) \right) \label{eq:rangeISP} \\
& = & \left(B_n(t)-A_n(t)\right)^{\alpha^{-1}} D \\
& \eqd & \left(R_{I^{(n)}}(0,t)  \right)^{\alpha^{-1}} D.
\end{eqnarray*}
By iteration, we get
\begin{equation}
R_{I^{(n)}}(0,t) = t^{\alpha^{-n}} \prod_{i=1}^n D_i^{\alpha^{-(i+1)}}
\end{equation}
where $D_i$ are i.i.d. copies of $D$. Since $\alpha > 1$ and $t\neq 0$, $t^{\alpha^{-n}} \to 1$ when $n \to \infty$. Now, we have to prove the convergence in law of $\prod_{i=0}^{n-1} D_i^{\alpha^{-i}}$ as $n\to +\infty$. Write
\begin{displaymath}
\log \prod_{i=0}^{n-1} \left| D_i \right|^{\alpha^{-i}} = \sum_{i=0}^{n-1} \alpha^{-i} \log \left| D_i \right|.
\end{displaymath}\par
By the Doob's $\mathbb{L}^p$ inequality~\cite[Theorem II.1.7]{RY99}, for any $\beta \in \reals$,
\begin{equation}
\prob{D \geq x} \leq \prob{\sup_{0 \leq t \leq 1} |X(t)| \geq \frac{x}{2}} \leq \frac{2^\beta \esp{X(1)^\beta}}{x^\beta}.
\end{equation} 
But, $\esp{X(1)^\beta} < \infty$ if $\beta < \alpha$. For $\beta=1 < \alpha$, 
\[\prob{ \alpha^{-i} \log \left| D_i \right| > i^{-2}} \leq \frac{\text{Cste}}{e^{\alpha^{i}i^{-2}}},\]
which is a summable sequence, since $\alpha>1$. By Borel-Cantelli's lemma, $\prod_{i=1}^n D_i^{\alpha^{-i}}$ converges as $n\to+\infty$. This ends the proof when $r=0$.\\
In the general case,  write
\begin{eqnarray*}
R_{I^{(n+1)}}(0,t)  & = & R_X\left( A_n(t),B_n(t) \right) \\ 
& \leq & (B_n(t) - A_n(t))^{\alpha^{-1}} D + r (B_n(t) - A_n(t)) \\
& \eqd & (R_{I^{(n)}}(0,t))^{\alpha^{-1}} D + r R_{I^{(n)}}(0,t) \label{eq:inegr}
\end{eqnarray*}

To prove that $R_{I^{(n)}}(0,t)$ converges, we use Theorem~13.0.1 in~\cite{M-T}. By~\eqref{eq:inegr},
\begin{displaymath}
\esp{R_{I^{(n+1)}}(0,t)|R_{I^{(n)}}(0,t)} -  R_{I^{(n)}}(0,t) \leq (R_{I^{(n)}}(0,t))^{\alpha^{-1}} \esp{D} - (1-r) R_{I^{(n)}}(0,t).
\end{displaymath}
So if $R_{I^{(n)}}(0,t) > \displaystyle \left( \frac{\esp{D}}{1-r} \right)^{\frac{1}{1-\alpha^{-1}}} = M$, then $\esp{R_{I^{(n+1)}}(0,t) |R_{I^{(n)}}(0,t)} - R_{I^{(n)}}(0,t) \leq -1$; else $\esp{R_{I^{(n+1)}}(0,t)|R_{I^{(n)}}(0,t)} - R_{I^{(n)}}(0,t) \leq M^{\alpha^{-1}} \esp{D} +1 =b$, from what we deduce\begin{equation}
\esp{R_{I^{(n+1)}}(0,t)|R_{I^{(n)}}(0,t)} - R_{I^{(n)}}(0,t) \leq -1 + b 1_{[0,M]}(R_{I^{(n)}}(0,t)).
\end{equation}
This proves the ergodicity of $\left( R_{I^{(n)}}(0,t) ; n \geq 0 \right)$ by \cite[Theorem~13.0.1(iv)]{M-T}.
\end{proof}

By Lemma~\ref{lem:rangeISP} and the arguments of~\cite[Section 3.2]{C-K}, this proves that $\phi$ has a bounded support a.s.

\subsection{Proofs of Theorem \ref{theo:ISP}}
\label{sec:PT}
The main technical point (Theorem \ref{theo:ISP} (3)) concerns the convergence of the MC $(I^{(n)}(t_i), \izk)$ in the stable case from which we will derive the other convergence theorem of the paper by some slight modifications.

In the proof $\widetilde{X}(1)$ stands for the symmetric part of $X(1)$ so that $X(t)\eqd rt+|t|^{1/\alpha}\widetilde{X}(1)$.\\
1. Assume $\alpha <1$, and $r \in \mathbb{R}$. One has $I^{(n)}(t)\eqd rI^{(n-1)}(t) + |I^{(n-1)}(t)|^{1/\alpha} \widetilde{X}(1)$. Since $1/\alpha>1$, it is apparent that $|I^{(n)}(t)|$ should become very large. To prove this, we compare $I^{(n)}$ with a deterministic geometric sequence $c^n$ for $(1/\alpha)> c>1$.
\[`P\l(|I^{(n)}(t)|\geq c^n \,|\, |I^{(n-1)}(t)| \geq c^{n-1}\r)\geq \inf_{x \geq c^{n-1}} `P( |rx+x^{1/\alpha} \widetilde{X}(1)| \geq c^n)\]
For any $x\geq c^{n-1}$, 
\be
 `P( |rx+x^{1/\alpha} \widetilde{X}(1)| \geq c^n)&=& 1- `P\l(\frac{- c^n-rx}{x^{1/\alpha}}\leq  \widetilde{X}(1)\leq  \frac{c^n-rx}{x^{1/\alpha}}\r)
\ee
and since stable distribution possesses continuous density $h$ at 0 (see Feller \cite[sec. XV(3)]{fel2}), this is
 \be
& \geq & 1- C h(0) \l(\frac{c^n-rx}{x^{1/\alpha}} -\frac{- c^n-rx}{x^{1/\alpha}}\r)\\
& = & 1- C h(0) \l(\frac{2c^n}{x^{1/\alpha}}\r)\\
& \geq &  1- C h(0) \l(\frac{2c^n}{c^{(n-1)/\alpha}}\r) 
\ee
for $n$ large enough and some constant $C>0$. We deduce
\[`P(|I^{(n)}(t)|\geq c^n ,\forall n\geq 1 )>0.\]

 When $\alpha=1$, for any $r$, $I^{(n)}(t)\eqd r I^{(n-1)}(t) + |I^{(n-1)}(t)| \widetilde{X}_1^{(n-1)}$. As the distribution of $\widetilde{X}_1$ is symmetric with respect to $0$, $(r I^{(n-1)}(t),|I^{(n-1)}(t)| \widetilde{X}_1^{(n-1)}) \eqd (rI^{(n-1)}(t),I^{(n-1)}(t) \widetilde{X}_1^{(n-1)})$, from which we get $I^{(n)}(t) \eqd I^{(n-1)}(t) (r + \widetilde{X}_1^{(n-1)} ) \eqd t \prod_{i=1}^n \left( r + \widetilde{X}_1^{(i)} \right)$.  Taking the logarithm, one sees that $I^{(n)}(t)$ does not converge in distribution.\\

2. The proof we provide here is valid for any $\alpha>0$. In the sequel we assume $r>1$ (the case $r<-1$ is similar).  For a fixed $t$, $I^{(n)}(t)\eqd rI^{(n-1)}(t) + |I^{(n-1)}(t)|^{1/\alpha} \widetilde{X}(1)$. For $r>1$, $I^{(n)}(t)$ can be compared with a geometric sequence with common ratio $s\in(1,r)$. Write
\[`P\l(|I^{(n)}(t)|\geq s^n \,|\, |I^{(n-1)}(t)| \geq s^{n-1}\r)\geq \inf_{x \geq s^{n-1}} `P( |rx+x^{1/\alpha} \widetilde{X}(1)| \geq s^n)\]
For any $x\geq s^{n-1}$, 
\be
 `P( |rx+x^{1/\alpha} \widetilde{X}(1)| \geq s^n)&=& 1- `P(- s^n\leq rx+x^{1/\alpha} \widetilde{X}(1)\leq  s^n)\\
&\geq & 1- `P( rx+x^{1/\alpha} \widetilde{X}(1)\leq  s^n)\\
&= & 1- `P(\widetilde{X}(1)\geq \frac{ rx-s^n}{x^{1/\alpha}})\\
&= & 1- `P(\widetilde{X}(1)\geq \frac{(r-s)x+ sx-s^n}{x^{1/\alpha}})\\
&\geq & 1- `P(\widetilde{X}(1)\geq \frac{(r-s)x}{x^{1/\alpha}})\\
&= & 1- `P(\widetilde{X}(1)\geq (r-s)x^{1-1/\alpha})\\
&\geq &1- c s^{(n-1)(1-\alpha)}
\ee
for $n$ large enough (we have use that  $`P(\widetilde{X}(1) \geq v) \leq c' v^{-\alpha}$ for some $c'$ and $v\geq s$, and that $r-s$ is a constant, and the symmetry of the distribution of $\widetilde{X}(1)$). We deduce from that that 
\[`P(|I^{(n)}(t)|\geq s^n ,\forall n\geq 1 )>0.\]
\par

3. The proof  of the convergence of $I^{(n)}[k]$ we propose is adapted from Curien \& Konstantopoulos \cite{C-K}. 

The sequence $(I^{(n)}[k],n\geq 1)$ is a MC, and its Markov kernel is given by
\[P(y[k];A)=`P((X(y_1),\dots,X(y_k))\in A),\]
for any $y[k]\in `R^k$, any Borelian $A\in \mathbb{R}^k$. 
As in \cite{C-K}, the Markov chain $I^{(n)}[k]$ is aperiodic, and irreducible with respect to the $p$-dimensional Lebesgue measure on $`R^k$.
We prove that it is Harris recurrent (and then possesses a unique invariant distribution), following the elements that can be found in Section 5.5.1. Meyn \& Tweedie \cite{M-T}.
Set for any $M>0$
 \[S_M=\{x[k]\in `R^k, M^{-1}\leq |x_i|\leq M,|x_i-x_j|\geq M^{-1}\}.\]
Denote by $f_{x[k]}$ the density of $(X(x_1),\dots,X(x_k))$, and let 
\[F_M(z[k])=\min_{x[k]\in S_M} f_{x[k]}(z[k]).\]
It is easily seen that $F_M$ is the density of a $\sigma$-finite measure $\mu_M$ on $`R^k$, with total mass $c_M=\int_{R^k}F_M(z[k])dz_1\cdots dz_k>0$ and satisfy $F_M(z[k])>0$ for any $z_1,\dots,z_k$. This provides the following bounds on the Markov kernel of our MC:
\[`P((X(x_1),\dots,X(x_k))\in A)\geq \mu_M(A),~~~ \textrm{for all }x[k]\in S_M.\]
This is the minoration condition (5.2) in  \cite{M-T}: the set $S_M$ is $\mu_M$-petite. To prove the Harris recurrence of the MC, it suffices to prove that for some $M>0$, the expected hitting time of $S_M$ by $I^{(n)}[k]$ starting from $x[k]$, is bounded above for $x[k] \in S_M$. 
Consider, for $x[k]\in `R^{+}{}^k$
 \[V(x[k])=U(x[k])+G(x[k])\]
with $U(x[k])=\max\{|x_i|, i \in\cro{1,k}\}$, $G(x[k])=\sum_{0\leq i <j\leq k}|x_i-x_j|^{-1/\alpha}$ (where $x_0=0$).
The potential function $V$ is unbounded on $`R^k$, and its drift is defined by
\[DV(x[k]):=PV(x[k])-V(x[k])=`E(V(X(x_1),\dots,X(x_k)))-V(x[k]),~~\textrm{ for } x[k]\in `R^k.\]
We just have to prove that
\ben\label{eq:qsdd}
\Delta V(x[k]) \leq - a +b 1_{S_M}(x),~~ x[k]\in `R^k.
\een

We have for any $\lambda>0$,
\be
PU(x[k])& = &`E\l(\max_{i \in\cro{1,k}}|X_{x_i}|\r)
      = `E[\max |\bar{X_{x_i}}|+|r| \,|x_i|] \leq |r|U(x)+ \lambda^{1/\alpha}`E[\max |\widetilde{X}_{x_i/\lambda}|]
\ee
and then taking $\lambda=U(x[k])$, we get
\be
PU(x[k])&\leq& |r|U(x[k])+ U(x[k])^{1/\alpha} C_1
\ee
where $C_1= `E[\max_{-1\leq s \leq 1} |X_{s}|]$. 
Now,
\be
PG(x[k])&=&\sum_{0\leq i <j\leq k}`E \l[|X_{x_i}-X_{x_j}|^{-1/\alpha}\r]\\
     &=&\sum_{0\leq i <j\leq k}`E \l[|\widetilde{X}_{x_i-x_j}+r(x_{i}-x_j)|^{-1/\alpha}\r]
\ee
We decompose each term in the sum using
\be
\frac{1}{|\widetilde{X}_{x_i-x_j}+r(x_{i}-x_j)|^{1/\alpha}}&= &\frac{1_{{\sf Sign}(\widetilde{X}_{x_i-x_j})={\sf Sign}(r)}+1_{{\sf Sign}(\widetilde{X}_{x_i-x_j})\neq {\sf Sign}(r)}}{|\widetilde{X}_{x_i-x_j}+r(x_{i}-x_j)|^{1/\alpha}}\\
&\leq & \frac{1_{{\sf Sign}(\widetilde{X}_{x_i-x_j})={\sf Sign}(r)}}{|\widetilde{X}_{x_i-x_j}|^{1/\alpha}}+\frac{1_{{\sf Sign}(\widetilde{X}_{x_i-x_j})\neq {\sf Sign}(r)}}{||\widetilde{X}_{x_i-x_j}|-|r(x_{i}-x_j)||^{1/\alpha}}
 \ee
By symmetry and unimodality of the density of centred stable distributions, one has
\[`E \l[|\widetilde{X}_{x_i-x_j}+r(x_{i}-x_j)|^{-1/\alpha}\r]\leq 2 `E\l({|\widetilde{X}_{x_i-x_j}|^{-1/\alpha}}\r)\]
Hence
\be
PG(x[k])    &=&2\sum_{0\leq i <j\leq k} (|x_i-x_j|^{1/\alpha})^{-1/\alpha}`E(|\widetilde{X}_1|^{-1/\alpha})\\
     &\leq & 2(k^2)^{1-1/\alpha}`E(|\widetilde{X}_1|^{-1/\alpha}) G(x)^{1/\alpha}
\ee
this last inequality come from $\sum_{i=1}^m |y_i|^{-1/\alpha^2} \leq m^{1-1/\alpha}\l(\sum_{i=1}^m |y_i|^{-1/\alpha}\r)^{1/\alpha}$ which can be viewed as an application of Jensen inequality: take $W$ uniform in $\cro{1,m}$, $f(x)=x^{1/\alpha}$. Since $f$ is concave
 $`E(f(|y_W|^{-1/\alpha}))\leq f(`E(|y_W|^{-1/\alpha}))$ which is equivalent to $\frac{1}{m}\sum_{i=1}^m|y_i|^{-1/\alpha^2} \leq (\frac{1}{m}\sum_{i=1}^m |y_i|^{-1/\alpha})^{1/\alpha}$.
We get, by convexity, for some constant $C_k$ and $C'_k$,
\be
PV(x[k])=PU(x[k])+PG(x[k])&\leq& C_k(U(x[k])^{1/\alpha}+G(x[k])^{1/\alpha}) +|r|U(x[k])\\
                 &\leq& C'_k V(x[k])^{1/\alpha}+|r|V(x[k])
\ee
which implies 
\ben\label{eq:dqs}
\Delta V(x[k])
&\leq& C'_kV(x[k])^{1/\alpha}-(1-|r|)V(x[k]).
\een
If $x[k]\notin S_M$ then there exists $i$ such that $|x_i|\geq M$ or $(i,j)$ such that $|x_i-x_j|\leq 1/M$. In the first case $V(x[k])\geq M$ and in the second one, $V(x[k])\geq M^{1/\alpha}$. For $M\geq 1$, we thus have $V(x[k])\geq M^{1/\alpha}$ for $x\notin S_M$. The RHS of \eref{eq:dqs} rewrites
  $V(x[k])^{1/\alpha}(C'_k-(1-|r|)V(x[k])^{1-1/\alpha})$. For $M$ chosen such that $(1-|r|)(M^{1/\alpha})^{1-1/\alpha}\geq \max\{1,2C'_k\}$, 
$V(x[k])^{1/\alpha}(C'_k-(1-|r|)V(x)^{1-1/\alpha})\leq -C'_k V(x[k])^{1/\alpha} \leq -C'_k M^{1/\alpha^2}$. 

For $x[k]\in  S_M$, $0\leq V(x[k]) \leq M+(k+1)^2M^{1/\alpha}<+\infty$ and then 
 $\Delta V(x[k])$ is bounded on $S_M$, this allows one to prove that \eref{eq:qsdd} holds for $C=S_M$ and $M$ large enough.

To end the proof, we need to prove the exchangeability of $I[k]$. The argument is general, and present in \cite{C-K}. 
Take $\sigma \in {\cal S}\cro{1,k}$ and $t_1,\dots,t_k$ distinct and non zeros. By the proof above, both $(I^{(n)}(t_i),\iuk)$ and $(I^{(n)}(t_{\sigma(i)}),\iuk)$ converge to $I[k]$. 
So, $(I^{(n)}(t_{\sigma(i)}),\iuk)$ converges to $I[k]$ and to $(I_{\sigma(i)},\iuk)$.
Hence, $I[k] \eqd (I_{\sigma^{}(i)},\iuk)$ for any $\sigma$. ~ $\Box$

\section{Conclusion}
\label{sec:Con}
In the paper, we have presented some results and some tools allowing to study iterated independent processes. Our tools are really useful only for processes with increments independent and stationary. Hence, the global frame is that of Lévy processes. But what we did for stationary process could probably done for continuous MC, homogeneous or not. For example it is likely that one can get some results on iterated Ornstein-Uhlenbeck processes whose increment are simple enough to be controlled.

\small
\bibliographystyle{abbrv}

\end{document}